\DeclareMathOperator{\Irr}{Irr}
\DeclareMathOperator{\tr}{tr}
\begin{document}
\input amssym.def
\setcounter{equation}{0}
\newcommand{\wt}{{\rm wt}}
\newcommand{\spa}{\mbox{span}}
\newcommand{\Res}{\mbox{Res}}
\newcommand{\SL}{\mbox{SL}}
\newcommand{\End}{\mbox{End}}
\newcommand{\Ind}{\mbox{Ind}}
\newcommand{\Hom}{\mbox{Hom}}
\newcommand{\Mod}{\mbox{Mod}}
\newcommand{\m}{\mbox{mod}\ }
\renewcommand{\theequation}{\thesection.\arabic{equation}}
\numberwithin{equation}{section}

\def \End{{\rm End}}
\def \stab{{\rm stab}}
\def \Aut{{\rm Aut}}
\def \Z{\mathbb Z}
\def \H{\mathbb H}
\def \M{\Bbb M}
\def \C{\mathbb C}
\def \R{\mathbb R}
\def \Q{\mathbb Q}
\def \N{\mathbb N}
\def \ann{{\rm Ann}}
\def \<{\langle}
\def \o{\omega}
\def \O{\Omega}
\def \Or{\cal O}
\def \M{{\cal M}}
\def \1t{\frac{1}{T}}
\def \>{\rangle}
\def \t{\tau }
\def \a{\alpha }
\def \e{\epsilon }
\def \l{\lambda }
\def \L{\Lambda }
\def \g{\gamma}
\def \b{\beta }
\def \om{\omega }
\def \o{\omega }
\def \ot{\otimes}
\def \cg{\chi_g}
\def \ag{\alpha_g}
\def \ah{\alpha_h}
\def \ph{\psi_h}
\def \S{\cal S}
\def \nor{\vartriangleleft}
\def \V{V^{\natural}}
\def \voa{vertex operator algebra\ }
\def \voas{vertex operator algebras}
\def \v{vertex operator algebra\ }
\def \1{{\bf 1}}
\def \be{\begin{equation}\label}
\def \ee{\end{equation}}
\def \qed{\mbox{ $\square$}}
\def \pf {\noindent {\bf Proof:} \,}
\def \bl{\begin{lem}\label}
\def \el{\end{lem}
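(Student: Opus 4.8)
The plan is to read off the quantum dimension of an irreducible module from the modular behavior of its graded trace function, and then extract its arithmetic nature from the entries of the modular $S$-matrix. First I would invoke Zhu's modular invariance for a rational, $C_2$-cofinite \voa $V$ of CFT type with $V\cong V'$: the normalized characters $\ch_q M(\t)=q^{h_M-c/24}\sum_{n\ge 0}(\dim M_{h_M+n})\,q^{n}$ (with $q=e^{2\pi i\t}$) of the finitely many inequivalent irreducibles $M\in\Irr(V)$ span an $\SL_2(\Z)$-invariant space, and in particular under $S:\t\mapsto-1/\t$ one has $\ch_q M(-1/\t)=\sum_{N\in\Irr(V)}S_{M,N}\,\ch_q N(\t)$ for a constant, symmetric, invertible matrix $(S_{M,N})$. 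I would take this, and the $T$-transformation $\t\mapsto\t+1$ acting diagonally by the scalars $e^{2\pi i(h_N-c/24)}$, as the known input.

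Next, to evaluate $\qdim_V M=\lim_{y\to 0^+}\ch_q M(iy)/\ch_q V(iy)$, I would set $\t=iy$ and apply the $S$-transformation to move to the regime $\t'=i/y\to i\infty$, i.e.\ $q'=e^{-2\pi/y}\to 0$. Because $V$ is of CFT type the vacuum $\1$ lies in the lowest graded piece, so among all irreducibles the vacuum module $V$ has the strictly smallest conformal weight, and its contribution dominates the $q'\to 0$ expansion. Taking leading terms in numerator and denominator after the $S$-transform would then give
\[
\qdim_V M=\lim_{y\to 0^+}\frac{\sum_N S_{M,N}\,\ch_q N(i/y)}{\sum_N S_{V,N}\,\ch_q N(i/y)}=\frac{S_{M,V}}{S_{V,V}},
\]
the surviving coefficients being exactly those of the unique minimal-weight (vacuum) term. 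The technical care here is justifying that the limit of the ratio equals the ratio of leading coefficients; this requires $S_{V,V}>0$ together with the uniqueness of the minimal conformal weight, which is precisely where the CFT-type, rationality, and $C_2$-cofiniteness hypotheses do the real work, and it simultaneously yields positivity of $\qdim_V M$.

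Finally, for the arithmetic conclusion I would use that the modular representation factors through a congruence group, so each $S_{M,N}$ lies in a cyclotomic field $\Q(\zeta)$; then $\qdim_V M=S_{M,V}/S_{V,V}$ is algebraic and $\Gal(\Q(\zeta)/\Q)$ permutes the irreducibles compatibly with $S$, giving the claimed Galois-equivariance. I expect the \textbf{main obstacle} to be controlling the denominator $S_{V,V}$ under the Galois action: one cannot treat $S_{M,V}$ in isolation, so I would track the global dimension $\glob(V)=S_{V,V}^{-2}=\sum_{N\in\Irr(V)}(\qdim_V N)^2$ in parallel, using its Galois-invariance (as a sum over a Galois-stable index set) to pin down how $S_{V,V}$ transforms and thereby conclude that the quantum dimensions are algebraic integers permuted by the Galois group.
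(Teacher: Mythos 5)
Your proposal is a proof of the wrong statement. What you sketch is the quantum-dimension formula $\qdim_V M=S_{M,V}/S_{V,V}$, its positivity, and the Galois behavior of the entries $S_{M,N}$ --- results belonging to \cite{DJX} and \cite{DLN}, which this paper only \emph{quotes}: Proposition \ref{pDLN} records from \cite{DLN}, with no proof given here, that $S$ is unitary and $S_{V,M}=S_{M,V}>0$. None of the lemmas of this paper asserts anything about quantum dimensions; they concern the sets $\mathscr{M}(g,h)$ and the action $M\mapsto M\circ k$ (Lemma \ref{circ}), the conjugation invariance $S_{M,N}=S_{M\circ k,N\circ k}$ of the twisted $S$-matrix (Lemma \ref{conj}, proved by comparing two $S$-expansions of $Z_M(v,(g,h),-1/\tau)$ and invoking the linear independence of the functions $Z_N(v,(h,g^{-1}),\tau)$ from \cite{DLM5}), and the permutation-orbifold trace identities of Section 6 (Lemmas \ref{two}--\ref{case2}). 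Your argument makes no contact with twisted modules, the pairs $(g,h)\in P(G)$, or the $\circ$-action, so it cannot be matched against the paper's proof of the lemma in question.

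Even taken on its own terms, as an attempt at the nearest statement in the paper (Proposition \ref{pDLN}), the sketch is circular at its central analytic step. To identify $\lim_{y\to 0^+}\ch_q M(iy)/\ch_q V(iy)$ with $S_{M,V}/S_{V,V}$ you must know that the vacuum term dominates after the $S$-transform and that the leading coefficient does not vanish; you state this ``requires $S_{V,V}>0$,'' but in the logical order of \cite{DLN} the positivity of $S_{V,N}$ \emph{is} the theorem --- obtained from Huang's modular tensor category structure and the Verlinde formula \cite{H}, not from the limit computation --- and the limit argument is then used in \cite{DJX} to evaluate $\qdim_V M$ once positivity is known; without $S_{V,N}>0$ for all $N$, the oscillatory subleading terms need not be negligible against a possibly vanishing leading term. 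The same borrowing occurs with $\glob(V)=S_{V,V}^{-2}$, which is again a theorem of \cite{DLN}, not an available input. Two smaller gaps: uniqueness of the minimal conformal weight is not a consequence of CFT type together with rationality and $C_2$-cofiniteness (in this paper it is imposed as hypothesis (V3)); and algebraic \emph{integrality} of $\qdim_V M$ does not follow from the ratio formula plus Galois equivariance alone --- one needs that $S_{M,V}/S_{V,V}$ is an eigenvalue of the integral fusion matrix attached to $M$, i.e.\ the Verlinde formula once more.
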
}
\def \ba{\begin{array}}
\def \ea{\end{array}}
\def \bt{\begin{thm}\label}
\def \et{\end{thm}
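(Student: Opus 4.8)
The plan is to build everything on Zhu's modular-invariance theorem together with the Verlinde formula, which under the present hypotheses (rational, $C_2$-cofinite, of CFT type, with $V\cong V'$) are available. Write $M^0=V,M^1,\dots,M^d$ for the inequivalent irreducible modules and let $S=(S_{ij})$ be the matrix realizing the action of $\tau\mapsto -1/\tau$ on the span of the normalized characters $\ch M^i(\tau)=\tr_{M^i}q^{L(0)-c/24}$. The first reduction is to express the quantity of interest through the $S$-matrix: the quantum dimension is $\qdim M^i=S_{0i}/S_{00}$, and this identification is precisely what lets number-theoretic input enter.

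First I would invoke the Verlinde formula, $N_{ij}^k=\sum_{m}\frac{S_{im}S_{jm}\overline{S_{km}}}{S_{0m}}$, to recognize that each fusion matrix $N_i=(N_{ij}^k)_{j,k}$ is a nonnegative integer matrix and that the family $\{N_i\}$ is a commuting family simultaneously diagonalized by $S$. The eigenvalues of $N_i$ are exactly the ratios $S_{im}/S_{0m}$ for $m=0,\dots,d$; in particular $\qdim M^i=S_{i0}/S_{00}$ is one of these eigenvalues. Since $N_i$ has integer entries, its characteristic polynomial $\det(xI-N_i)$ is monic with integer coefficients, so every eigenvalue---and hence $\qdim M^i$---is an algebraic integer. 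This delivers the algebraicity claim with essentially no computation once the Verlinde formula is in hand.

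Next, for the Galois statement, I would use that $\Gal(\overline{\Q}/\Q)$ permutes the roots of each integer polynomial $\det(xI-N_i)$. Fixing $\sigma$ in the Galois group, the assignment $m\mapsto \sigma(S_{im}/S_{0m})$ again produces an eigenvalue of $N_i$ for every $i$ simultaneously; because the joint eigenvalue tuple $(S_{im}/S_{0m})_{i}$ determines $m$ uniquely (the columns of $S$ being linearly independent), this defines a single permutation $\hat\sigma$ of $\{0,\dots,d\}$, i.e.\ of $\Irr(V)$, characterized by $\sigma\bigl(S_{im}/S_{0m}\bigr)=S_{i\hat\sigma(m)}/S_{0\hat\sigma(m)}$. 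Compatibility of $\hat\sigma$ with the fusion structure then follows by transporting the Verlinde formula through $\sigma$.

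The main obstacle I anticipate is not the algebra above but the \emph{positivity} input: showing $\qdim M^i\ge 1$ and identifying it as the Perron--Frobenius eigenvalue of the nonnegative integer matrix $N_i$. This requires the asymptotic analysis of the characters as $\tau\to 0^+$ (equivalently $q\to 1^-$), where $C_2$-cofiniteness guarantees convergence and controls the leading exponential growth through the effective central charge, so that $\qdim M^i=\lim_{\tau\to 0^+}\ch M^i(\tau)/\ch V(\tau)$ is a well-defined positive real number. Matching this limit to $S_{0i}/S_{00}$, and invoking Perron--Frobenius to see that it is the dominant eigenvalue of $N_i$ (whence $\qdim M^i\ge \qdim V=1$), is the technically delicate step and the place where the hypotheses on $V$ are genuinely used.
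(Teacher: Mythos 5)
You have proved the wrong statement. The ``statement'' you were given is not a mathematical assertion at all but a stray fragment of the paper's macro layer (the shorthand the authors use to open a theorem environment), and your proposal does not match any theorem that the paper actually proves. What you sketch --- algebraic integrality of quantum dimensions via the integrality of the fusion matrices $N_i$, the Galois permutation $\hat\sigma$ of $\Irr(V)$ characterized by $\sigma(S_{im}/S_{0m})=S_{i\hat\sigma(m)}/S_{0\hat\sigma(m)}$, and the Perron--Frobenius identification of $\qdim M^i$ --- is the content of Dong--Lin--Ng and of the quantum-dimension paper of Dong--Jiao--Xu, which enter this paper only as citations and as the quoted Proposition asserting that the $S$-matrix is unitary with $S_{V,M}=S_{M,V}>0$. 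The theorems this paper proves are of a different nature: Theorem \ref{MVG} computes $Z_{M_\lambda}(v,\tau)$ and its $S$-transform by combining the modular invariance of twisted trace functions $Z_M(v,(g,h),\tau)$ from [DLM5] with the orthogonality of irreducible characters of the twisted group algebra $\mathbb{C}^{\alpha_M}[G_M]$ acting in the decomposition $M=\oplus_\lambda W_\lambda\otimes M_\lambda$; Theorem \ref{ZVG} extracts the restricted $S$-matrix entries $S_{M^i_\lambda,M^j_\mu}$ using the conjugation invariance $S_{M,N}=S_{M\circ k,N\circ k}$ of Lemma \ref{conj}; and Section 6 computes the permutation-orbifold $S$-matrix through the Barron--Dong--Mason functor $T_g$ and the operator $\Delta_k(z)$. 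None of these ingredients --- twisted modules, the group action $M\circ h$, the pairs $(g,h)\in P(G)$, the projective $G_M$-action --- appears anywhere in your argument, so it cannot be adapted into a proof of any statement here: it answers a different question about a different object.

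Taken on its own terms, your sketch is a reasonable outline of the known Galois-symmetry argument, but even there two steps are underspecified: the Verlinde formula you ``invoke'' is itself a deep theorem (Huang, cited as [H] in this paper) whose hypotheses must be checked, not a free input; and your final positivity step is circular as written, since identifying $\qdim M^i=\lim_{\tau\to 0^+}\ch M^i(\tau)/\ch V(\tau)$ with $S_{0i}/S_{00}$ already requires the positivity $S_{V,M}>0$ of the very Proposition from [DLN] you are trying to reprove, together with a hypothesis on conformal weights such as (V3) of this paper to control which term dominates as $q\to 1^-$. If your assignment was genuinely a theorem of this paper, redo the exercise starting from the trace functions $Z_M(v,(g,h),\tau)$ and the character orthogonality for $\mathbb{C}^{\alpha_M}[G_M]$; the Verlinde/Galois machinery plays no role in the authors' proofs.
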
}
\def \br{\begin{rem}\label}
\def \er{\end{rem}}
\def \ed{\end{de}}
\def \bp{\begin{prop}\label}
\def \ep{\end{prop}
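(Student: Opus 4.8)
The plan is to establish the properties of the quantum dimension $\qdim_V M$ of an irreducible module $M$ over a rational, $C_2$-cofinite \voa $V$ by first evaluating it through Zhu's modular invariance and then reading off its arithmetic nature from the Verlinde formula. I take as the working definition $\qdim_V M = \lim_{\tau\to 0^+}\ch_M(\tau)/\ch_V(\tau)$, where the characters $\ch_N(\tau)$ range over the (finitely many) inequivalent irreducible $V$-modules $N$. The whole argument is driven by the interplay between the analytic behavior of these graded traces near the cusp and the algebraic structure of the fusion ring.

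First I would invoke Zhu's theorem, so that the span of the $\ch_N(\tau)$ is $\SL(2,\Z)$-invariant and $\ch_M(-1/\tau)=\sum_N S_{M,N}\,\ch_N(\tau)$ for a constant matrix $S=(S_{M,N})$. Rewriting the defining ratio after this transformation and sending $\tau\to 0^+$ (equivalently $-1/\tau\to i\infty$), each $\ch_N(-1/\tau)=q^{\,h_N-c/24}(a_N+O(q))$ is dominated by the term of smallest exponent; since simplicity of $V$ forces the vacuum to be the unique irreducible with $h=0$ and positive leading coefficient, numerator and denominator each collapse to their $V$-contribution and the limit becomes $S_{M,V}/S_{V,V}$. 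This simultaneously shows the limit exists, is positive, and equals $\qdim_V M = S_{M,0}/S_{0,0}$.

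Next I would extract the integrality and Galois structure from the Verlinde formula $N_{ij}^k=\sum_\ell \frac{S_{i\ell}S_{j\ell}\overline{S_{k\ell}}}{S_{0\ell}}$, which holds in the present rational, $C_2$-cofinite setting. The fusion matrices $N_i=(N_{ij}^k)_{j,k}$ are commuting nonnegative integer matrices simultaneously diagonalized by $S$, with eigenvalues $S_{i\ell}/S_{0\ell}$; in particular $\qdim_V M_i=S_{i0}/S_{00}$ is an eigenvalue of an integer matrix, hence an algebraic integer, and the full collection lies in a cyclotomic field on which $\Gal(\Q(S)/\Q)$ acts by permuting the columns of $S$. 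The bound $\qdim_V M \geq 1$ then follows from Perron--Frobenius applied to $N_i$: the positive column $(S_{i0}/S_{00},\dots)$ is the Perron eigenvector, so $\qdim_V M_i$ is the spectral radius of $N_i$, which is at least $1$ because $N_i$ has a nonzero integer entry forcing a nontrivial cycle; equality holds exactly when $M_i$ is a simple current. Summing the squares recovers the global dimension $\glob(V)=\sum_i(\qdim_V M_i)^2 = S_{00}^{-2}$.

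The main obstacle is the analytic step: one must justify the term-by-term passage to the limit and show that the vacuum character strictly dominates, i.e. that no other irreducible shares the minimal exponent with a competing positive coefficient. This rests on $C_2$-cofiniteness and rationality to guarantee convergence of the $q$-series on the upper half-plane and the modular transformation, and on simplicity of $V$ to pin down the unique leading term; controlling these asymptotics uniformly across all $N$ is the delicate part. Once it is in place, the Verlinde and Perron--Frobenius arguments are essentially formal.
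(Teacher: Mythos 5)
The statement you were given is Proposition \ref{pDLN}: the $S$-matrix is unitary and $S_{V,M}=S_{M,V}$ is positive for every irreducible $M$. Note first that the paper offers no proof at all — it quotes this verbatim from [DLN]. Your attempt does not actually prove the proposition: of its three claims, two are never addressed. Unitarity of $S$ is the deep content, established in [DLN] via the congruence property and Galois symmetry of the modular representation; it does not follow from your limit computation nor from Perron--Frobenius. The symmetry $S_{V,M}=S_{M,V}$ is smuggled in rather than proved: you invoke Huang's Verlinde formula, but Huang's theorem already presupposes (indeed delivers) the symmetry and nondegeneracy of $S$, so at that level of citation the entire proposition is exactly the quoted result and your argument reduces to the paper's own reference. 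What you do prove — that $\qdim_V M=S_{M,V}/S_{V,V}$ is an algebraic integer $\geq 1$, and $\glob(V)=S_{V,V}^{-2}$ — are the quantum-dimension results of [DJX], which are not the statement at hand.

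The positivity part, which you do engage, has a concrete gap. The vacuum-dominance step is not a consequence of simplicity of $V$, as you assert; it is precisely hypothesis (V3) of this paper (every irreducible $g$-twisted module has nonnegative conformal weight, zero only for $V$). Without it the claim is false: for the non-unitary $(2,5)$ Virasoro minimal model the irreducible of weight $-1/5$ dominates the $q$-expansion at the cusp, and one of the ratios $S_{M,V}/S_{V,V}$ equals $(1-\sqrt{5})/2<0$. Moreover, even granting (V3), your limit argument yields positivity of the \emph{ratio} $S_{M,V}/S_{V,V}$ (after your Perron--Frobenius step excludes the value $0$), not of $S_{M,V}$ itself: you still need $S_{V,V}$ to be real and positive, which you tacitly use when writing $\glob(V)=S_{0,0}^{-2}$ but never establish. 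Pinning down the reality and sign of $S_{V,V}$ is exactly where the actual arguments in [DLN] and [DJX] do additional work, so this link in your chain cannot be waved through.
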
}
\def \p{\varphi}
\def \d{\delta}
\def \irr{\rm irr}

\newtheorem{th1}{Theorem}
\newtheorem{ree}[th1]{Remark}
\newtheorem{thm}{Theorem}[section]
\newtheorem{prop}[thm]{Proposition}
\newtheorem{coro}[thm]{Corollary}
\newtheorem{lem}[thm]{Lemma}
\newtheorem{rem}[thm]{Remark}
\newtheorem{de}[thm]{Definition}
\newtheorem{hy}[thm]{Hypothesis}
\newtheorem{conj}[thm]{Conjecture}
\newtheorem{ex}[thm]{Example}

\newcommand\red[1]{{\color{red} #1}}
\begin{center}
{\Large {\bf $S$-matrix in orbifold theory}}\\
\vspace{0.5cm}
Chongying Dong\footnote
{Partially supported by   NSFC 11871351 }\\
\ Department of Mathematics, University of
California, Santa Cruz, CA 95064 USA 
\\ 
Li Ren\footnote{Supported by NSFC  11671277 }\\
 School of Mathematics,  Sichuan University,
Chengdu 610064 China\\
Feng Xu\footnote{Partially supported by  NSFC 11871150}\\
Department of Mathematics, University of California, Riverside, CA
92521, USA 
\end{center}

\begin{abstract}
The restricted $S$-matrix of $V^G$ is determined for any regular vertex operator algebra $V$ and finite automorphism group $G$ of $V.$  As an application,
 the $S$-matrices for cyclic permutation orbifolds of prime orders are computed.
\end{abstract}

\section{Introduction}
The orbifold theory studies a vertex operator algebra $V$ under the action of a finite automorphism group $G.$ It is proved recently in \cite{M, CM} that if $V$ is regular and $G$ is solvable then $V^G$ is also regular. Under the assumption that $V^G$ is regular, every  irreducible $V^G$-module occurs in an irreducible $g$-twisted $V$-module \cite{DRX}.
In particular, this result holds for $G$ being solvable.  If $V$ is rational and  $V^G$ $C_2$-cofinite, then $V^G$ is also rational \cite{Mc}. In this paper, we study the $S$-matrix
for $V^G$ under the assumption that $V$is regular. 

The modularity of trace functions in the theory of rational vertex operator algebra plays a power role in studying vertex operator algebra. The conformal block of a regular
vertex operator algebra spanned by the trace functions on the irreducible $V$-modules affords to a representation of the modular group $\Gamma=SL_2(\Z)$ \cite{Z}. Moreover,
the kernel of this representation is a congruence subgroup \cite{DLN}. This implies that the trace functions in rational vertex operator algebras are modular forms on congruence subgroup. In particular, the irreducible characters are modular functions. The results in \cite{Z} are extended to  include an action of finite groups in \cite{DLM5} to get a modular
invariance of the trace functions in orbifold theory. In the case that  $V$ is  holomorphic this result provides a framework for the generalized moonshine \cite{N}. Also, the trace functions in orbifold theory are modular forms on a congruence subgroup without the assumption that $V^G$ is rational \cite{DR}.

It is well known that $\Gamma$ is generated by $S=\left(\begin{array}{cc} 0 &-1\\ 1 &0\end{array}\right)$ and $T=\left(\begin{array}{cc} 1 &1\\ 1 &0\end{array}\right).$ The action of $T$
on conformal block with respect to a distinguished basis is a diagonal matrix determined by the central charge and the conformal weights of irreducible modules. The action of $S$ is called the $S$-matrix which  is the key to understand the
action of $\Gamma.$ The celebrated Verlinde formula \cite{V,H} exhibits  the beauty and importance of the $S$-matrix.  $S$-matrix has also been used to determine the number of irreducible $g$-twisted modules \cite{DLM5} and classification of irreducible modules.

The so-called restricted $S$-matrix of $V^G$ is the restriction of the $S$-matrix of $V^G$ to the irreducible $V^G$-modules appearing in the twisted modules. There is no doubt that
this should be the full $S$-matrix once the $C_2$-cofiniteness of $V^G$ is established. More explicitly, we give a precise formula for the restricted $S$-matrix in terms of the $S$-matrix of trace functions of the twisted modules obtained in \cite{DLM5}. The main idea is that the space of the conformal block of $V^G$ spanned by the trace functions on the
irreducible $V^G$-modules appearing in twisted $V$-modules in \cite{Z} is equal to  the twisted conformal block of $V$ spanned by the trace functions on the irreducible twisted modules in \cite{DLM5}. Some entries of the restricted $S$-matrix have been computed previously in \cite{DRX}  for studying the quantum dimensions and global dimensions
for vertex operator algebras $V^G$ \cite{DJX}. In the case $G$ is abelian or $V$ is holomorphic, the $S$-matrices have simpler expressions. The $S$-matrix for cyclic group $G$ and holomorphic vertex operator algebra has been investigated in \cite{EMS} for the purpose of constructing holomorphic vertex operator algebras with central charge $24.$

As an application, we determine the $S$-matrix for the cyclic permutation orbifold of prime order. Let $k$ be a positive integer. Then $V^{\otimes k}$ is a vertex operator algebra \cite{FHL} and
$S_k$ acts on $V^{\otimes k}$ as automorphisms. For any $g\in S_k,$ the $g$-twisted $V^{\otimes k}$-module category was determined in \cite{BDM} in terms of $V$-module category.
In particular, if $g$ is a cycle, there is an equivalence between $V$-module category and $g$-twisted $V^{\otimes k}$-module category. For general orbifold theory, one do not know how to construct twisted modules although the number of irreducible twisted modules are known.
But for the permutation orbifold theory one has an explicit construction of twisted modules in terms of $V$-
modules \cite{BDM}.
In this paper when $k$ is a prime and $g$ is a $k$-cycle and $G$ is generated by $g$ we give an explicit formula of $S$-matrix of $(V^{\otimes k})^G$ in terms of the action of $\Gamma $ on the conformal block of $V.$ The assumption that $k$ is a prime ensures that every power of $g$ is a cycle, so the $S$-matrix of  $(V^{\otimes k})^G$  has a simple formula. One can consider general case with more complicated computation.
The $S$-matrix of the permutation orbifold for
any subgroup of $S_k$ has been studied in \cite{B} and \cite{KLX} from the point views of conformal field theory and conformal nets.

For the general orbifold theory, one can see \cite{DLM3, DLM4} for the study of  twisted modules, \cite{DLM1, DM, MT,
HMT, DY, DJX} for  Schur-Weyl duality and quantum Galois theory in orbifold theory, \cite{X} for conformal net approach to the orbifold theory  and \cite{DPR} for the connection between the holomorphic orbifold theory and the twisted Drinfeld
double \cite{D}.  One can also find the construction of twisted modules in \cite{FLM1, FLM2, L, DL, Li1} in terms of $\Delta$-operators.

The paper is organized as follows. In Section 2, we review twisted modules, $g$-rationality and related results from \cite{DLM3}. The modular invariance result on trace functions for
the orbifold theory from \cite{DLM5} is given in Section 3. Classification of the irreducible $V^G$-modules occurring in irreducible twisted modules from \cite{DLM1, DY, MT, DRX}
is present in Section 4.
Section 5 is devoted to the study of the restricted $S$-matrix for any regular vertex
operator algebra $V$ and any finite automorphism group $G.$
In particular, we give an explicit formula of the restricted $S$-matrix of $V^G$ in terms of the $S$-matrix from \cite{DLM5} for the action of $\Gamma$ on the twisted conformal block
of $V$.
Section 6 is an application of Section 5 to the cyclic permutation orbifolds.

\section{Basics}
 Various notions of twisted modules for a \voa following \cite{DLM3} are reviewed in this section. The concepts such as  rationality, regularity, and $C_2$-cofiniteness from \cite{Z} and \cite{DLM2} are discussed.

Let $V$ be a vertex operator algebra and $g$ an automorphism of $V$ of finite order $T$. Then  $V$ is a direct sum of eigenspaces of $g:$
\begin{equation*}\label{g2.1}
V=\bigoplus_{r\in \Z/T\Z}V^r,
\end{equation*}
where $V^r=\{v\in V|gv=e^{-2\pi ir/T}v\}$.
We use $r$ to denote both
an integer between $0$ and $T-1$ and its residue class \m $T$ in this
situation.

\begin{de} \label{weak}
A {\em weak $g$-twisted $V$-module} $M$ is a vector space equipped
with a linear map
\begin{equation*}
\begin{split}
Y_M: V&\to (\End\,M)[[z^{1/T},z^{-1/T}]]\\
v&\mapsto\displaystyle{ Y_M(v,z)=\sum_{n\in\frac{1}{T}\Z}v_nz^{-n-1}\ \ \ (v_n\in
\End\,M)},
\end{split}
\end{equation*}
which satisfies the following:  for all $0\leq r\leq T-1,$ $u\in V^r$, $v\in V,$
$w\in M$,
\begin{eqnarray*}
& &Y_M(u,z)=\sum_{n\in \frac{r}{T}+\Z}u_nz^{-n-1} \label{1/2},\\
& &u_lw=0~~~
\mbox{for}~~~ l\gg 0,\label{vlw0}\\
& &Y_M({\mathbf 1},z)=Id_M,\label{vacuum}
\end{eqnarray*}
 \begin{equation*}\label{jacobi}
\begin{array}{c}
\displaystyle{z^{-1}_0\delta\left(\frac{z_1-z_2}{z_0}\right)
Y_M(u,z_1)Y_M(v,z_2)-z^{-1}_0\delta\left(\frac{z_2-z_1}{-z_0}\right)
Y_M(v,z_2)Y_M(u,z_1)}\\
\displaystyle{=z_2^{-1}\left(\frac{z_1-z_0}{z_2}\right)^{-r/T}
\delta\left(\frac{z_1-z_0}{z_2}\right)
Y_M(Y(u,z_0)v,z_2)},
\end{array}
\end{equation*}
where $\delta(z)=\sum_{n\in\Z}z^n$ and
all binomial expressions (here and below) are to be expanded in nonnegative
integral powers of the second variable.
\end{de}

\begin{de}\label{ordinary}
A $g$-{\em twisted $V$-module} is
a $\C$-graded weak $g$-twisted $V$-module $M:$
\begin{equation*}
M=\bigoplus_{\lambda \in{\C}}M_{\lambda}
\end{equation*}
such that
$\dim M_{\l}$ is finite and for fixed $\l,$ $M_{\frac{n}{T}+\l}=0$
for all small enough integers $n,$ where $M_{\l}=\{w\in M|L(0)w=\l w\}$ and $L(0)$ is the component operator of $Y(\omega,z)=\sum_{n\in \Z}L(n)z^{-n-2}.$    If $w\in M_{\l}$ we refer to $\l$ as the {\em weight} of
$w$ and write $\l=\wt w.$
\end{de}

Let $\Z_+$ be the set of nonnegative integers.
\begin{de}\label{admissible}
 An {\em admissible} $g$-twisted $V$-module
is a  $\frac1T{\Z}_{+}$-graded weak $g$-twisted $V$-module $M:$
\begin{equation*}
M=\bigoplus_{n\in\frac{1}{T}\Z_+}M(n)
\end{equation*}
such that
\begin{equation*}
v_mM(n)\subseteq M(n+\wt v-m-1)
\end{equation*}
for homogeneous $v\in V,$ $m,n\in \frac{1}{T}{\Z}.$
\ed

If $g=Id_V$  we have the notions of  weak, ordinary and admissible $V$-modules \cite{DLM2}.

\begin{de}
A \voa $V$ is called $g$-rational, if the  admissible $g$-twisted module category is semisimple. $V$ is called rational if $V$ is $1$-rational.
\end{de}

There is another important concept called $C_2$-cofiniteness \cite{Z}.
\begin{de}
We say that a \voa $V$ is $C_2$-cofinite if $V/C_2(V)$ is finite dimensional, where $C_2(V)=\langle v_{-2}u|v,u\in V\rangle.$
\end{de}

The following results about $g$-rational \voas \  are well-known \cite{DLM3,DLM5}.
\begin{thm}\label{grational}
If $V$ is $g$-rational, then

(1) Any irreducible admissible $g$-twisted $V$-module $M$ is a $g$-twisted $V$-module. Moreover, there exists a number $\l \in \mathbb{C}$ such that  $M=\oplus_{n\in \frac{1}{T}\mathbb{Z_+}}M_{\l +n}$ where $M_{\lambda}\neq 0.$ The $\l$ is called the conformal weight of $M;$

(2) There are only finitely many irreducible admissible  $g$-twisted $V$-modules up to isomorphism.

(3) If $V$ is also $C_2$-cofinite and $g^i$-rational for all $i\geq 0$ then the central charge $c$ and the conformal weight $\l$ of any irreducible $g$-twisted $V$-module $M$ are rational numbers.
\end{thm}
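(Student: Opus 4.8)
The plan is to dispatch the three statements by separate tools: parts (1) and (2) by the representation theory of the twisted Zhu algebra $A_g(V)$, and part (3) by the modular invariance of twisted trace functions. The first two are essentially Zhu-algebra bookkeeping; the third is the real content.

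For (1), I would first record the bracket relation $[L(0),v_m]=(\wt v-m-1)v_m$ for homogeneous $v$, which shows that each zero mode $o(v)=v_{\wt v-1}$ preserves every graded piece $M(n)$ and that $L(0)=o(\omega)$ commutes with all zero modes. Normalizing so that $M(0)\neq 0$, the space $M(0)$ becomes a module over $A_g(V)$, and irreducibility of $M$ forces $M(0)$ to be an irreducible $A_g(V)$-module. Since $V$ is $g$-rational, $A_g(V)$ is finite-dimensional and semisimple, so $M(0)$ is finite-dimensional; as $L(0)$ lies in the commutative center of the image of $A_g(V)$ on the irreducible $M(0)$, Schur's lemma gives $L(0)=\lambda$ on $M(0)$ for some $\lambda\in\C$. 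Finally, because $M$ is generated by $M(0)$ and $L(0)(v_m w)=(\lambda+\wt v-m-1)v_m w$ with $v_m w\in M(\wt v-m-1)$, the operator $L(0)$ acts as the scalar $\lambda+n$ on $M(n)$. Hence $M_{\lambda+n}=M(n)$, which is finite-dimensional by a standard spanning argument using that $M$ is generated by the finite-dimensional $M(0)$ and that $V$ has finite-dimensional graded pieces; the weights lie in $\lambda+\frac1T\Z_+$, and $M$ is an ordinary $g$-twisted module of conformal weight $\lambda$.

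For (2), I would use that the top-level functor $M\mapsto M(0)$ induces a bijection between the isomorphism classes of irreducible admissible $g$-twisted $V$-modules and those of irreducible $A_g(V)$-modules, the inverse being Zhu's induction construction, valid because $V$ is $g$-rational. A finite-dimensional semisimple algebra has only finitely many irreducible modules up to isomorphism, so there are only finitely many irreducible admissible $g$-twisted $V$-modules.

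Part (3) is the genuine obstacle, and here I would invoke the modular invariance of the orbifold trace functions from \cite{DLM5}. Over all irreducible $g^a$-twisted modules $M$ and all $0\le a,b\le T-1$ one forms the functions $Z(g^a,g^b,\tau)=\tr_M\phi(g^b)\,q^{\,L(0)-c/24}$, where $\phi(g^b)$ is the action of $g^b$ on $M$; under $C_2$-cofiniteness and $g^i$-rationality these span a finite-dimensional space carrying a representation $\rho$ of $\Gamma$. The transformation $T:\tau\mapsto\tau+1$ sends $Z(g^a,g^b,\tau)$ to $\varepsilon\,e^{2\pi i(\lambda_M-c/24)}Z(g^a,g^{a+b},\tau)$ with $\varepsilon$ a root of unity, so on the index set $T$ acts by the finite-order permutation $(a,b)\mapsto(a,a+b)$ and $\rho(T)$ is a monomial matrix with eigenphases built from $e^{2\pi i(\lambda_M-c/24)}$. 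The crux is that $\rho$ has finite image, equivalently that $\ker\rho$ is a congruence subgroup, whence $\rho(T)$ has finite order and these eigenphases are roots of unity, giving $\lambda_M-c/24\in\Q$ for every $M$. Applying this to the adjoint module $V$, the $g^0$-twisted module of conformal weight $0$, yields $c\in\Q$, and then $\lambda_M\in\Q$ for all $M$. The whole difficulty is concentrated in this modular-representation input, and in particular in establishing the finiteness of the image of $\rho$.
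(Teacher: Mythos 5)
Your treatment of parts (1) and (2) follows exactly the route the paper points to by its citation (the paper itself gives no proof, only the reference to \cite{DLM3}): top level $M(0)$ as an irreducible module over the twisted Zhu algebra $A_g(V)$, centrality of the image of $\omega$, Schur's lemma, and the bijection between irreducible admissible $g$-twisted modules and irreducible $A_g(V)$-modules. Two caveats. First, the statement that $g$-rationality forces $A_g(V)$ to be finite-dimensional and semisimple is itself the main structural theorem of \cite{DLM3}; citing it is legitimate, but be aware you are leaning on it rather than proving it. Second, your ``standard spanning argument'' for $\dim M_{\lambda+n}<\infty$ does not work as stated: $M(n)$ is spanned by vectors $v_{\wt v-1-n}w$ with $w\in M(0)$, and for fixed $n$ \emph{every} homogeneous $v$, of every weight, contributes (take $m=\wt v-1-n$), so the spanning set in a fixed degree is infinite. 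Finite-dimensionality of the homogeneous pieces is a genuine (if modest) theorem in \cite{DLM3}, not a formal consequence of finite generation; your sketch of (1) is incomplete at exactly this point.

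The serious gap is in part (3), where your argument is circular. You reduce rationality of $c$ and $\lambda$ to the claim that $\rho$ has finite image (asserted to be ``equivalent'' to $\ker\rho$ being congruence --- also not quite right, since finite image only gives finite index, and finite-index subgroups of $SL_2(\Z)$ need not be congruence). But $\rho(T)$ is a monomial matrix whose nonzero entries are precisely the phases $e^{2\pi i(\lambda_M-c/24)}$, so finiteness of the order of $\rho(T)$ is essentially \emph{equivalent to} the rationality statement you are trying to prove; your reduction therefore has no content, and you concede as much by leaving the finiteness unestablished. Worse, you cannot repair this by invoking the congruence property of \cite{DLN,DR}: those proofs take the rationality of the conformal weights and central charge (i.e., the finite order of the $T$-matrix) as an \emph{input}, so the logical dependence runs the other way. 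The proof the paper actually points to, in \cite{DLM5}, proceeds differently, following the method of Anderson and Moore: the characters have $q^{1/T}$-expansions with nonnegative integer coefficients, the $\Gamma$-stable span of the twisted trace functions is finite-dimensional, hence the characters satisfy a modular linear differential equation whose indicial roots include the exponents $\lambda-c/24$; algebraicity of the exponents together with the integrality (bounded denominators) of the coefficients then forces rationality, first of $c$ (via the module $V$ itself, of weight $0$) and then of every $\lambda$. None of that apparatus --- the differential equation, the integrality of Fourier coefficients --- appears in your proposal, and it is precisely where the difficulty of (3) lives.
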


\begin{de}
A \voa $V$ is called regular if every weak $V$-module is a direct sum of irreducible $V$-modules.
\end{de}

A \voa $V=\oplus_{n\in \Z}V_n$  is said to be of CFT type if $V_n=0$ for negative
$n$ and $V_0=\C {\bf 1}.$
It is proved in \cite{Li2} and \cite{ABD} that if  $V$ is of CFT type, then regularity is equivalent to rationality and $C_2$-cofiniteness. Also $V$ is regular if and only if the weak module category is semisimple \cite{DYu}.

In the rest of this  paper we assume the following:
\begin{enumerate}
\item[(V1)] $V=\oplus_{n\geq 0}V_n$ is a simple, rational
\red{,}
$C_2$-cofinite  vertex operator algebra  of CFT type,
\item[(V2)] $G$ is a finite automorphism group of $V,$
\item[(V3)] The conformal weight of any irreducible $g$-twisted $V$-module $M$
 is nonnegative and is zero if and only if $M=V.$
\end{enumerate}

We remark that with  these assumptions, $V^G$ is $C_2$-cofinite, rational if $G$ is solvable \cite{M,CM}. Moreover, $V$ is $g$-rational for any finite order automorphism $g$ \cite{ADJR,DRX}.

\section{Modular Invariance}

This section largely follows from \cite{DLM5}.  We present the main results on modular invariance in orbifold theory.

For the modular invariance, we need the action of $\Aut(V)$ on twisted modules \cite{DLM5}. Let $g, h$ be two automorphisms of $V$ with $g$ of finite order. If $(M, Y_M)$ is a weak $g$-twisted $V$-module, there is a weak $h^{-1}gh$-twisted  $V$-module $(M\circ h, Y_{M\circ h})$ where $M\circ h\cong M$ as vector spaces and
$Y_{M\circ h}(v,z)=Y_M(hv,z)$ for $v\in V.$
This defines a right  action of $\Aut(V)$ on weak twisted $V$-modules and on isomorphism
classes of weak twisted $V$-modules. Symbolically, we write
\begin{equation*}
(M,Y_M)\circ h=(M\circ h,Y_{M\circ h})= M\circ h.
\end{equation*}
$M$ is called $h$-stable if $M, M\circ h$ are isomorphic.
It is proved in \cite{DLM5} that if $M$ is an admissible $g$-twisted $V$-module, then $M\circ g$ and $M$ are isomorphic.

Assume that  $g,h$ commute. Then  $h$  acts on the $g$-twisted modules.
 Denote by $\mathscr{M}(g)$ the equivalence classes of irreducible $g$-twisted $V$-modules and set $\mathscr{M}(g,h)=\{M \in \mathscr{M}(g)| M\circ h\cong M\}.$
 Since  $V$ is $g$-rational for all $g$, both $\mathscr{M}(g)$ and $\mathscr{M}(g,h)$ are finite sets.
 The following lemma is obvious from the definitions. It will be useful later.

 \begin{lem}\label{circ}
 Let $g,h\in G$, $M$ an irreducible $g$-twisted $V$-module.
 For any $k\in G$, we have

(1) $\mathscr{M}(g,h)$ and $\mathscr{M}(k^{-1}gk, k^{-1}hk)$ have the same cardinality.

(2) $M\circ k\in \mathscr{M}(g)$ if and only if $k\in C_G(g)$.

(3) $M$ and $M\circ h$ are isomorphic $V^G$-modules.
\end{lem}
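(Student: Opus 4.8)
The plan is to derive all three parts directly from the right-action property of $\circ$ on isomorphism classes of twisted modules, together with the single observation that every $h\in G$ fixes $V^G$ pointwise. Throughout I use that $M\mapsto M\circ k$ sends $\mathscr{M}(g)$ to $\mathscr{M}(k^{-1}gk)$ bijectively with inverse $N\mapsto N\circ k^{-1}$, that the action is associative, $(M\circ a)\circ b=M\circ(ab)$, and that it preserves isomorphism classes. These facts were recorded just before the lemma statement.

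I would dispose of part (3) first, since it is the cleanest. By construction $M\circ h$ and $M$ share the same underlying vector space, and $Y_{M\circ h}(v,z)=Y_M(hv,z)$. For $v\in V^G$ we have $hv=v$, so $Y_{M\circ h}(v,z)=Y_M(v,z)$ for all $v\in V^G$; hence $M$ and $M\circ h$ are literally the same $V^G$-module, in particular isomorphic. No intertwining operator needs to be constructed. For part (2) I would use that $M\circ k$ is by construction a $k^{-1}gk$-twisted module, while for a nonzero irreducible twisted module the twisting automorphism is determined by the module structure (it is read off from the fractional powers of $z$ occurring in $Y_M$). Thus $M\circ k\in\mathscr{M}(g)$, i.e. $M\circ k$ is $g$-twisted, forces $k^{-1}gk=g$; conversely, if $k\in C_G(g)$ then $k^{-1}gk=g$ and $M\circ k$ is automatically an irreducible $g$-twisted module, hence lies in $\mathscr{M}(g)$.

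For part (1) I would show that the bijection $M\mapsto M\circ k$ from $\mathscr{M}(g)$ to $\mathscr{M}(k^{-1}gk)$ restricts to a bijection $\mathscr{M}(g,h)\to\mathscr{M}(k^{-1}gk,k^{-1}hk)$, which immediately yields equality of cardinalities. The key computation is $(M\circ k)\circ(k^{-1}hk)=M\circ\bigl(k\cdot k^{-1}hk\bigr)=M\circ(hk)=(M\circ h)\circ k$. Since $\circ k$ preserves isomorphism and is invertible, $M\circ h\cong M$ holds if and only if $(M\circ h)\circ k\cong M\circ k$, which by the displayed identity is exactly $(M\circ k)\circ(k^{-1}hk)\cong M\circ k$; that is, $M\in\mathscr{M}(g,h)$ iff $M\circ k\in\mathscr{M}(k^{-1}gk,k^{-1}hk)$. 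I would also note that because $g,h$ commute so do $k^{-1}gk,k^{-1}hk$, so the target set is well-defined. The only point requiring slight care is the determinacy of the twist used in part (2); everything else is formal manipulation of the group action, consistent with the remark that the lemma is essentially immediate from the definitions.
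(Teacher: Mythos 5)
Your proposal is correct, and it is exactly the argument the paper intends: the paper states this lemma with the remark that it is ``obvious from the definitions'' and gives no proof, and your derivation of all three parts from the right-action identity $(M\circ a)\circ b=M\circ(ab)$, the fact that $M\circ k$ is $k^{-1}gk$-twisted, and $hv=v$ for $v\in V^G$ is the spelled-out version of that definitional argument. Your one point of genuine care---that in part (2) the twisting automorphism of a nonzero twisted module is determined by the fractional powers of $z$ in $Y_M$, which uses simplicity of $V$ via the injectivity of $v\mapsto Y_M(v,z)$---is correctly identified and handled.
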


 Let $M$ be an irreducible $g$-twisted $V$-module and $G_M$ is a subgroup of $G$ consisting of $h\in G$ such that $M\circ h$ and $M$ are isomorphic.  Using the Schur's Lemma
 gives a projective representation $\varphi$ of $G_M$ on $M$ such that
 $$\varphi(h)Y(u,z)\varphi(h)^{-1}=Y(hu,z)$$
  for $h\in G_M.$  If $h=1$ we simply take $\varphi(1)=1.$
For homogeneous $v\in V$ we  set $o(v)=v_{\wt v-1}$ which is the degree zero operator of $v.$
Here and below $\tau$ is in the complex upper half-plane $\H$ and $q=e^{2\pi i\tau}.$
For $v\in V$ we set
\begin{equation}\label{e1.00}
Z_M(v, (g,h),\tau)=\tr_{_M}o(v)\varphi(h) q^{L(0)-c/24}=q^{\lambda-c/24}\sum_{n\in\frac{1}{T}\Z_+}\tr_{_{M_{\l+n}}}o(v)\varphi(h)q^{n}
\end{equation}
which is a holomorphic function on the $\H$ \cite{DLM5,Z}.
Note that $Z_M(v, (g,h),\tau)$ is defined up to a nonzero scalar. For short we set  $Z_M(v,\tau)=Z_M(v,(g,1),\tau).$  Then $\chi_M(\tau)=Z_M(\1,\tau)$   is called the character
of $M.$

There is another vertex operator algebra $(V, Y[~], \1, \tilde{\omega})$ associated to $V$
in \cite{Z}.  Here $\tilde{\omega}=\omega-c/24$ and
$$Y[v,z]=Y(v,e^z-1)e^{z\cdot \wt v}=\sum_{n\in \Z}v[n]z^{n-1}$$
for homogeneous $v.$ We also write
$$Y[\tilde{\omega},z]=\sum_{n\in \Z}L[n]z^{-n-2}.$$
If $v\in V$ is homogeneous in the second vertex operator algebra, we denote its weight by $\wt [v].$

Let $P(G)$ denote the ordered commuting pairs in $G.$
For $(g,h)\in P(G)$ and $M\in \mathscr{M}(g,h),$ $Z_M(v,(g,h),\tau)$ is a function
on $V\times \H.$ Let $W$ be the vector space spanned by these functions. It is clear that
the dimension of $W$ is equal to $\sum_{(g,h)\in P(G)}|\mathscr{M}(g,h)|$ \cite{DLM5}.
We now define an action of the modular group $\Gamma$ on $W$ such that
\begin{equation*}
Z_M|_\gamma(v,(g,h),\tau)=(c\tau+d)^{-{\rm wt}[v]}Z_M(v,(g,h),\gamma \tau),
\end{equation*}
where
\begin{equation}\label{e1.11}
\gamma: \tau\mapsto\frac{ a\tau + b}{c\tau+d},\ \ \ \gamma=\left(\begin{array}{cc}a & b\\ c & d\end{array}\right)\in\Gamma=SL(2,\Z).
\end{equation}
 We let $\gamma\in \Gamma$ act on the right of $P(G)$ via
$$(g, h)\gamma = (g^ah^c, g^bh^d ).$$

The following results are established in \cite{DLM5,Z,DLN,DR}.
\begin{thm}\label{minvariance} Let $V$ and $G$ be as before. Then

(1) There is a representation $\rho: \Gamma\to GL(W)$
 such that for $(g, h)\in P(G),$   $\gamma =\left(\begin{array}{cc}a & b\\ c & d\end{array}\right)\in \Gamma,$
and $M\in \mathscr{M}(g,h),$
$$
Z_{M}|_{\gamma}(v,(g,h),\tau)=\sum_{N\in \mathscr{M}(g^ah^c,{g^bh^d)}} \gamma_{M,N}Z_{N}(v,(g,h)\gamma, ~\tau)$$
where $\rho(\gamma)=(\gamma_{M,N}).$
That is,
$$Z_{M}(v,(g,h),\gamma\tau)=(c\tau+d)^{{\rm wt}[v]}\sum_{N\in \mathscr{M}(g^ah^c,{g^bh^d)}} \gamma_{M,N}Z_{N}(v,(g^ah^c, g^bh^d), ~\tau).$$

(2) The cardinalities $|\mathscr{M}(g,h)|$ and $|\mathscr{M}(g^ah^c,g^bh^d)|$ are equal for any $(g,h)\in P(V)$ and
$\gamma\in \Gamma.$ In particular, the number of irreducible $g$-twisted $V$-modules is exactly the number of irreducible $V$-modules which are  $g$-stable.

(3) The kernal of $\rho$ is a congruence subgroup of $SL_2(\Z).$ That is, each $
Z_{M}|_{\gamma}(v,(g,h),\tau)$ is a modular form of weight $\wt[v]$ on the congruence subgroup. In particular, the character $\chi_M(\tau)$ is a modular function on the same congruence subgroup.
\end{thm}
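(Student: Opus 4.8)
The plan is to prove the three parts by separate machinery, since they assemble results of different depth. Part~(1) is the twisted analogue of Zhu's modular invariance theorem, which I would obtain from the genus-one recursion together with the $C_2$-cofiniteness hypothesis. Part~(2) then follows formally from the invertibility of $\rho(\gamma)$ and the linear independence of trace functions of distinct irreducible twisted modules. Part~(3), the congruence property, is the deepest, and I would derive it from the finite order of $\rho(T)$ via the Galois-symmetry argument for $\Gamma$-representations.

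For Part~(1), I would first establish the genus-one recursion: using the twisted Jacobi identity of Definition~\ref{weak}, one writes each higher trace function $\tr_M o(u)o(v_1)\cdots o(v_n)\varphi(h)q^{L(0)-c/24}$ as a combination of lower ones, the coefficients being (twisted) Weierstrass and Eisenstein series of controlled modular weight. Invoking $C_2$-cofiniteness, I would reduce every such function, modulo the images of $L[-1]$ and $L[-2]$, to a finite spanning set; this forces each $Z_M(v,(g,h),\tau)$, as a function of $\tau$, to satisfy a linear ordinary differential equation whose coefficients are modular forms on $\Gamma$. Because those coefficient forms transform correctly, the solution space is $\Gamma$-invariant, and rationality guarantees that the trace functions of the finitely many irreducible $(g^ah^c,g^bh^d)$-twisted modules furnish a fundamental system of solutions. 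Separating solutions by the leading exponents $\lambda-c/24$ of the distinct irreducibles, and tracking how $\gamma$ permutes the labels $(g,h)\mapsto(g^ah^c,g^bh^d)$, yields the stated transformation law with a well-defined matrix $\rho(\gamma)=(\gamma_{M,N})$; the homomorphism property $\rho(\gamma_1\gamma_2)=\rho(\gamma_1)\rho(\gamma_2)$ follows from uniqueness of the expansion.

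For Part~(2), since $\rho(\gamma)\in GL(W)$ is invertible and carries the span of $\{Z_M:M\in\mathscr{M}(g,h)\}$ onto the span of $\{Z_N:N\in\mathscr{M}(g^ah^c,g^bh^d)\}$, these two subspaces have equal dimension. Linear independence of the trace functions of distinct irreducible twisted modules (again using rationality and the separation of conformal weights) identifies each dimension with the cardinality of its indexing set, giving $|\mathscr{M}(g,h)|=|\mathscr{M}(g^ah^c,g^bh^d)|$. For the final assertion I would specialize to $\gamma=S$ and $(g,h)=(g,1)$, so that $(g,1)S=(1,g^{-1})$; then $|\mathscr{M}(g,1)|$ is the number of irreducible $g$-twisted modules, while $|\mathscr{M}(1,g^{-1})|$ counts the ordinary irreducible $V$-modules fixed by $g^{-1}$, equivalently the $g$-stable ones.

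For Part~(3), I would first invoke Theorem~\ref{grational}(3): $C_2$-cofiniteness and $g^i$-rationality make $c$ and every conformal weight $\lambda$ rational, so the eigenvalues of $\rho(T)$ are roots of unity (using also that $G$ is finite, so $T$ permutes the pairs $(g,h)$ with finite orbits), whence $\rho(T)$ has finite order $N$. The goal is then $\Gamma(N)\subseteq\ker\rho$; I would feed the relations $(\rho(S)\rho(T))^3=\rho(S)^2$ and $\rho(S)^2=$ charge conjugation, together with the Galois action on the entries of $\rho(S)$, into the arithmetic characterization of congruence representations, exactly as in \cite{DLN,DR}. Once $\ker\rho\supseteq\Gamma(N)$, each $Z_M|_\gamma(v,(g,h),\tau)$ is a modular form of weight $\wt[v]$ on $\Gamma(N)$, and setting $v=\mathbf{1}$ shows $\chi_M$ is a modular function on the same group. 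The substantive obstacle is precisely this step: the congruence property requires the full Galois-symmetry and arithmetic apparatus and is far deeper than the analytic input of Parts~(1)--(2); the remaining delicate point is the bookkeeping in Part~(1), ensuring that the twisted recursion coefficients and the permutation of the pairs $(g,h)$ are tracked consistently so that $\rho(\gamma)$ is genuinely well defined.
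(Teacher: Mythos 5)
You should first be aware that the paper contains no proof of Theorem \ref{minvariance} to compare against: the statement is imported wholesale, with the transformation formula in (1) attributed to \cite{Z} for $G=\{1\}$, (1) and (2) to \cite{DLM5} for general $G$, and (3) to \cite{DLN} ($G=\{1\}$) and \cite{DR} (general $G$). Measured against those sources, your outline is an essentially faithful reconstruction of the cited arguments: the genus-one recursion plus $C_2$-cofiniteness giving differential equations with modular coefficients is exactly the Zhu--DLM mechanism behind (1); part (2) is indeed the formal consequence of invertibility of $\rho(\gamma)$ together with linear independence of the trace functions, and your specialization $(g,1)S=(1,g^{-1})$, identifying $\mathscr{M}(1,g^{-1})$ with the $g$-stable irreducible $V$-modules, is precisely how the last sentence of (2) is obtained; and you correctly isolate (3) as the deep arithmetic input, resting on Theorem \ref{grational}(3) for the finite order of $\rho(T)$.

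Two points in your sketch are inaccurate as stated, though both are repaired in the cited literature. First, in (1) you propose to separate solutions ``by the leading exponents $\lambda-c/24$ of the distinct irreducibles''; this fails whenever inequivalent irreducible twisted modules share a conformal weight, which certainly happens. In \cite{DLM5} linear independence and the spanning property are instead proved via the twisted Zhu algebra: the top levels of inequivalent irreducibles are inequivalent simple modules for that associative algebra, and one separates them by traces of $o(v)$ for varying $v\in V$, not by $q$-exponents; moreover the actual mechanism there is an abstractly axiomatized space of $(g,h)$ one-point functions which $\Gamma$ visibly maps to the $(g,h)\gamma$ space, the differential equation serving to establish convergence and the basis property rather than to define $\rho$ directly. (Relatedly, since each $Z_M(v,(g,h),\tau)$ is only defined up to the scalar normalization of $\varphi(h)$, the entries $\gamma_{M,N}$ are not canonical, and well-definedness of $\rho$ means well-definedness on $W$ after fixing these choices --- a bookkeeping point you flag but do not resolve.) Second, in (3) the congruence property is not extracted merely from the finite order of $\rho(T)$ plus a generic Galois symmetry of $\rho(S)$: the proof in \cite{DLN} routes through Huang's theorem \cite{H} that the module category is a modular tensor category and the Ng--Schauenburg congruence subgroup theorem for such categories, and \cite{DR} then bootstraps the twisted trace functions from the untwisted case without assuming $V^G$ rational. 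Since the paper itself simply cites these results, your decision to import (3) rather than prove it matches the paper's stance; just be precise about what machinery the import actually consists of.
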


The modular transformation formula in (1) with $G=\{1\}$ was obtained in \cite{Z}.
(1), (2) were given in \cite{DLM5} for general $G.$ (3)  is a result from \cite{DLN} with $G=\{1\}$ and is proved in \cite{DR} for general $G.$

It is well known that the modular group $\Gamma$ is generated by $S=\left(\begin{array}{cc}0 & -1\\ 1 & 0\end{array}\right)$ and $T=\left(\begin{array}{cc}1 & 1\\ 0 & 1\end{array}\right).$  So the representation $\rho$ is uniquely determined by $\rho(S)$ and  $\rho(T).$ It is almost trivial to compute $\rho(T)$ once we know the  irreducible modules.  The matrix $\rho(S)$
is called the $S$-matrix of the orbifold theory.  Here is a special case of the $S$-transformation:
\begin{equation}\label{S-tran1}
Z_{M}(v,-\frac{1}{\tau})=\tau^{\wt[v]}\sum_{N\in \mathscr{M}(1,g^{-1}) }S_{M,N}Z_{N}(v,(1,g^{-1}),\tau)
\end{equation}
for $M\in \mathscr{M}(g)$ and
\begin{equation}\label{S-tran2}
Z_{N}(v,(1,g),-\frac{1}{\tau})=\tau^{\wt[v]}\sum_{M\in \mathscr{M}(g)} S_{N,M}Z_{M}(v,\tau)
\end{equation}
for $N\in \mathscr{M}(1).$  We will use $\mathscr{M}_V$ for $\mathscr{M}(1).$
The matrix $S=(S_{M,N})_{M,N\in \mathscr{M}_V}$ is called $S$-matrix of $V$. We need the following result from \cite{DLN}  later.
\begin{prop} \label{pDLN}The $S$-matrix is unitary and $S_{V,M}=S_{M,V}$ is positive for any irreducible $V$-module $M.$
\end{prop}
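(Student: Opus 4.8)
Throughout take $G=\{1\}$, so that the relevant conformal block is the space $W$ spanned by the genus-one trace functions $Z_M(v,\tau)$, $M\in\mathscr M_V$, regarded as functions on $V\times\H$. By the computation of $\dim W$ preceding Theorem \ref{minvariance} these functions are linearly independent, so $\{Z_M\}_{M\in\mathscr M_V}$ is a basis of $W$ with $\dim W=|\mathscr M_V|$, and in this basis $\rho(S)=S=(S_{M,N})$. The plan is to establish three structural identities — (a) $S^2=C$, where $C_{M,N}=\delta_{M',N}$ is the charge-conjugation permutation attached to the contragredient module $M'$; (b) the reality relation $\overline{S_{M,N}}=S_{M',N}$; and (c) the symmetry $S_{M,N}=S_{N,M}$ — and then to read off unitarity from them. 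Positivity of $S_{V,M}$ is handled separately by an asymptotic argument.

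For (a) I would iterate the $S$-transformation (\ref{S-tran1}). Since $S^2=-I$ in $\Gamma$ and $\tau\mapsto-1/(-1/\tau)=\tau$, applying (\ref{S-tran1}) twice and tracking the automorphy factor $\tau^{\wt[v]}$ together with the action of $-I$ on the vertex operators (the $(-1)^{L[0]}$-involution) identifies $\rho(-I)$ with the charge conjugation $M\mapsto M'$; hence $S^2=C$ with $C$ a permutation satisfying $C^2=I$. For (b) I would complex-conjugate the identity $Z_M(v,-1/\tau)=\tau^{\wt[v]}\sum_N S_{M,N}Z_N(v,\tau)$. Writing $\overline{Z_M(v,\tau)}$ in terms of the adjoint $o(v)^{\dagger}$ and using the invariant bilinear form on $V$ and on its modules (available because $V$ is simple of CFT type), one expresses $\overline{Z_M(v,\tau)}$ through the contragredient module $M'$ evaluated at $-\bar\tau$; comparing coefficients against the linearly independent basis then yields $\overline{S_{M,N}}=S_{M',N}$, the involution acting trivially on the vacuum row and column.

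Granting (a)--(c), unitarity is immediate, since by symmetry and then reality
\begin{equation*}
(SS^{\dagger})_{M,N}=\sum_{K}S_{M,K}\overline{S_{N,K}}=\sum_{K}S_{M,K}S_{N',K}=\sum_{K}S_{M,K}S_{K,N'}=(S^2)_{M,N'}=C_{M,N'}=\delta_{M',N'}=\delta_{M,N},
\end{equation*}
so $SS^{\dagger}=I$. For positivity I would restrict to $v=\1$ (so $\wt[\1]=0$ and there is no automorphy factor), giving $\chi_M(-1/\tau)=\sum_N S_{M,N}\chi_N(\tau)$. Evaluating at $\tau=i/y$ with $y\to 0^{+}$, the argument $\tau=i/y\to i\infty$ while $-1/\tau=iy\to0$, so $\chi_M(iy)=\sum_N S_{M,N}\chi_N(i/y)$; by hypothesis (V3) the vacuum $V$ is the unique irreducible module of conformal weight $0$, whence the sum is dominated by its term and $\chi_M(iy)\sim S_{M,V}\,e^{\pi c/(12y)}$. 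Since $\chi_M(iy)=\sum_n(\dim M_{\l+n})e^{-2\pi y(\l+n-c/24)}$ is a positive real number for every $y>0$, applying this first to $M=V$ forces $S_{V,V}>0$; then $\qdim_V M=\lim_{y\to0^{+}}\chi_M(iy)/\chi_V(iy)=S_{M,V}/S_{V,V}\ge 1$ gives $S_{M,V}=S_{V,V}\,\qdim_V M>0$. The equality $S_{V,M}=S_{M,V}$ is the vacuum-indexed instance of the symmetry (c).

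The main obstacle is (c), the symmetry $S_{M,N}=S_{N,M}$, and equivalently the fact that the invariant Hermitian form making $\rho$ unitary is the standard one in the basis $\{Z_M\}$. Identities (a) and (b) are direct manipulations of the transformation law, and positivity is a Tauberian computation using only (V3) and $c>0$; but the symmetry of $S$ lies deeper and is precisely where the rigidity of the module category enters. I would obtain it from the modular tensor category structure on the $V$-modules (equivalently, from the braiding/ribbon data underlying the Verlinde formula), which is the substantive ingredient imported in \cite{DLN}; alternatively, one can try to deduce (c) from (a), (b) together with the unitarizability of $\rho$ — guaranteed by the congruence, hence finite-image, property of Theorem \ref{minvariance}(3) — by showing that the $\rho$-invariant form is diagonal in the trace-function basis.
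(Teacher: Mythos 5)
The paper does not actually prove Proposition \ref{pDLN}: it is imported wholesale from \cite{DLN}, whose argument rests on Huang's theorem that the module category of a regular, self-dual vertex operator algebra of CFT type is a modular tensor category, together with the Verlinde formula \cite{H}. Your outline --- symmetry of $S$ from the tensor-category structure, $S^2=C$ and the reality relation $\overline{S_{M,N}}=S_{M',N}$ from manipulating the transformation law, and positivity from character asymptotics at the cusp using (V3) --- reproduces the architecture of \cite{DLN} and \cite{DJX}, so at the level of strategy you are competing with the cited literature rather than with anything in this paper. You are also right that the symmetry $S_{M,N}=S_{N,M}$ is the deep ingredient: your proposed fallback of deducing (c) from (a), (b) and unitarizability of $\rho$ does not work, since the finite image of $\rho$ (via the congruence property) only guarantees an invariant Hermitian form in \emph{some} basis, and nothing forces that form to be the standard one in the basis $\{Z_M\}$; without that, unitarizability gives no constraint of the form $S^t=S$. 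Two smaller cautions on (a) and (b): applying (\ref{S-tran1}) twice produces the $v$-dependent factor $(-1)^{\wt[v]}$, which can only be converted into the $v$-independent permutation $C$ by the contragredient trace identity relating $Z_{M'}(v,\tau)$ to $Z_M(v,\tau)$ via the invariant bilinear form; and conjugation in (b) must be carried out on the full functions $Z_M(v,\tau)$, not on characters, precisely because $\chi_M=\chi_{M'}$ means the characters alone are not linearly independent. Your sketch gestures at both points but neither is a routine manipulation.

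The concrete gap is in the positivity argument. Your Tauberian computation proves only that $\lim_{y\to 0^+}\chi_M(iy)e^{-\pi c/(12y)}=S_{M,V}\geq 0$: since $\chi_M(iy)>0$ for all $y$, the limit is nonnegative and real, but nothing excludes $S_{M,V}=0$. In particular the assertion that ``applying this first to $M=V$ forces $S_{V,V}>0$'' is unjustified --- if $S_{V,V}=0$ the growth of $\chi_V(iy)$ would simply be governed by subleading terms, with no contradiction to positivity of the character. Strict positivity requires the separate nonvanishing statement $S_{V,M}\neq 0$ for every irreducible $M$, which in \cite{DLN} comes from the modular tensor category structure: by the Verlinde formula one has $S_{X,M}S_{Y,M}=S_{V,M}\sum_{Z}N_{X,Y}^{Z}S_{Z,M}$, so $S_{V,M}=0$ would force the entire column $(S_{X,M})_X$ to vanish, contradicting invertibility of $S$. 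Only after this does the nonnegative limit upgrade to $S_{M,V}>0$. Relatedly, your appeal to $\qdim_V M=S_{M,V}/S_{V,V}\geq 1$ from \cite{DJX} is circular at this point in the argument: the existence of that limit and the quotient formula presuppose $S_{V,V}>0$, which is part of what is being proved, and the quantum-dimension results of \cite{DJX} themselves rest on the positivity statement of \cite{DLN}. So the proposal correctly identifies the skeleton of the proof but, as written, neither closes the symmetry step nor the strict-positivity step without importing exactly the Verlinde-formula input that the paper's citation to \cite{DLN} encapsulates.
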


\section{Irreducible modules for $V^G$}

In this section we give the irreducible $V^G$-modules appearing in an irreducible $g$-twisted $V$-module for some $g\in G$ \cite{DRX}.
If $G$ is solvable,  these are the all irreducible $V^G$-modules. For general $G$, this is also true if $V^G$ is rational and $C_2$-cofinite.

Let $M=(M,Y_M)$ be an irreducible $g$-twisted $V$-module. We have discussed that  $G_M$ acts on $M$ projectively.  Let $\a_M$  be the corresponding 2-cocycle in $C^2(G,\C^{\times}).$ Then $\varphi(h)\varphi(k)=\a_M(h,k)\varphi(hk)$
for all $h,k\in G_M.$
We may  assume that $\alpha_M$ is unitary  in the sense that there is a fixed positive integer $n$ such that  $\alpha_M(h,k)^n=1$ for all $h,k\in G_M.$
Let $\C^{\a_M}[G_M]=\oplus_{h\in G_M} \C\bar h$ be the twisted group algebra with product $\bar h\bar k=\alpha_M(h,k)\bar{hk}.$ It is well known that $\C^{\a_M}[G_M]$ is a semisimple associative algebra.
It follows that $M$ is a module for
$\C^{\a_M}[G_M].$
Note that $G_M$ is a subgroup of $C_G(g)$ and  $g$ lies in $G_M.$  Let $M=\oplus_{n\in \frac{1}{T}\Z_+}M(n)$ and $M(0)\ne 0.$ Then $\varphi(g)$ acts on $M(n)$ as $e^{2\pi in}$ for all $n$  \cite{DRX}.

 Let $\Lambda_{G_M}$ be the set of all irreducible characters $\lambda$
of  $\C^{\a_M}[G_M]$. Denote the corresponding simple module by $W_{\l}.$  Let $M^{\lambda}$ be the sum of simple
$\C^{\a_M}[G_M]$-submodules of $M$ isomorphic
to $W_{\l}.$ Then
\begin{equation}\label{decom}
M=\oplus_{\lambda\in \Lambda_{G_M}}M^{\lambda}=\oplus_{\lambda\in \Lambda_{G_M}}W_{\l}\otimes M_{\lambda}
\end{equation}
where the multiplicity space $M_{\lambda}$ of $W_{\l}$ in $M$ is a  $V^{G}$-module.

Recall that the group $G$ acts on set ${\cal S}=\cup_{g\in G}\mathscr{M}(g)$ and $ M\circ h$ and $M$ are isomorphic
$V^G$-modules for any $h\in G$ and $M\in {\cal S}.$ It is clear that the cardinality of the $G$-orbit $|M\circ G|$ of $M$ is
$[G:G_M].$
The following results are obtained in  \cite{DRX}, and \cite{MT} (also see \cite{DLM1}, \cite{DY}) .

 \begin{thm}\label{mthm1}  Let $g,h\in G,$ $M$ an irreducible $g$-twisted $V$-module, $N$ an irreducible $h$-twisted $V$-module.
1) $M^{\l}$ is nonzero for any $\l\in \Lambda_{G_M}.$

2) Each $M_{\l}$ is an irreducible  $V^{G}$-module.

3) $M_{\l}$ and $M_{\gamma}$ are equivalent
 $V^{G}$-module if and only
if $\l=\gamma.$

4) For any $\lambda\in \Lambda_{G_M}$ and $\mu \in \Lambda_{G_N},$ the irreducible $V^G$-modules
$M_{\l}$ and $N_{\mu}$ are inequivalent if $M,N$ are not in the same orbit of ${\cal S}$ under the action of $G.$
 \end{thm}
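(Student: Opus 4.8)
**The plan is to prove each of the four statements by combining the decomposition (\ref{decom}) with the semisimplicity of the twisted group algebra and the $V^G$-module structure.**

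For parts 1)--3), the key tool is the double-commutant / Schur--Weyl duality between the semisimple algebra $\C^{\a_M}[G_M]$ and $V^G$ acting on the irreducible $g$-twisted module $M$. First I would establish that the actions of $\C^{\a_M}[G_M]$ and $V^G$ on $M$ mutually centralize each other. The inclusion $\varphi(G_M)\subseteq \End_{V^G}(M)$ is immediate from the intertwining relation $\varphi(h)Y_M(u,z)\varphi(h)^{-1}=Y_M(hu,z)$, since for $u\in V^G$ this gives $\varphi(h)Y_M(u,z)=Y_M(u,z)\varphi(h)$. The harder direction is showing that $\End_{V^G}(M)$ is spanned by $\varphi(G_M)$, i.e. that the image of $\C^{\a_M}[G_M]$ is exactly the commutant of $V^G$ on $M$. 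Once this double-commutant property is in hand, the general theory of duality for a pair consisting of a finite-dimensional semisimple algebra and its centralizer yields (1), (2), (3) simultaneously: the isotypic decomposition $M=\oplus_{\l}W_\l\otimes M_\l$ has every $M_\l\neq 0$ (this is 1), each multiplicity space $M_\l$ is an irreducible $V^G$-module (this is 2), and distinct $\l$ give inequivalent $M_\l$ (this is 3).

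For part 4), suppose $M,N$ lie in different $G$-orbits of $\S$ but some $M_\l\cong N_\mu$ as $V^G$-modules. The strategy is to derive a contradiction using the action of the full group $G$. By Lemma~\ref{circ}(3), $M\circ h\cong M$ as $V^G$-modules for every $h\in G$, so the $V^G$-isomorphism class of a twisted module depends only on its $G$-orbit; the submodules $M_\l$ are the $V^G$-constituents of that orbit. The plan is to show that the sets of irreducible $V^G$-modules arising from two distinct orbits are disjoint. I would do this by exhibiting an element of $V$ (or more precisely a suitable intertwining/trace datum) that separates the two orbits, for instance by using the modular-invariance framework of Section 3 to distinguish the trace functions $Z_M$ and $Z_N$: modules in different orbits sit in different summands of the twisted conformal block, and their associated graded traces cannot coincide, forcing the $V^G$-constituents to be distinct.

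\textbf{The main obstacle} I anticipate is the nonabelian, projective nature of the $G_M$-action: because $\varphi$ is only a projective representation with cocycle $\a_M$, one must work throughout with the twisted group algebra $\C^{\a_M}[G_M]$ rather than the ordinary group algebra, and verify that the double-commutant theorem and the separation argument in part 4) go through with the cocycle present. Establishing that $\varphi(G_M)$ exhausts $\End_{V^G}(M)$ (and not merely sits inside it) is the technical heart; I expect this to rely on the irreducibility of $M$ as a $g$-twisted $V$-module together with the fact that $V^G$ together with the $\varphi(h)$ generate the full endomorphism picture, which is precisely where the regularity hypotheses (V1)--(V3) and the results of \cite{DRX,MT} on quantum Galois theory enter.
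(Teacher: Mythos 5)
The paper does not actually prove this theorem: it is quoted verbatim with the attribution ``obtained in \cite{DRX} and \cite{MT} (also see \cite{DLM1}, \cite{DY})''. Measured against those sources, your plan for parts 1)--3) is the right one in outline --- a Schur--Weyl duality for the pair $\bigl(\C^{\a_M}[G_M],\,V^G\bigr)$ acting on $M$ --- but the two genuinely hard points are only flagged, not argued: (i) the classical double-commutant theorem does not apply to the infinite-dimensional module $M$ and the non-associative ``algebra'' $V^G$; in \cite{MT} this is replaced by duality for the finite-dimensional twisted Zhu algebras $A_{g,n}(V)$, glued compatibly over $n$ (the ``uniform product'' theorem); and (ii) part 1) does not follow from the double-commutant property alone --- you also need that $\varphi$ embeds $\C^{\a_M}[G_M]$ faithfully into $\End M$, since otherwise some $W_\l$ could simply fail to occur. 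Deferring these to \cite{DRX,MT} is defensible, since the paper itself does exactly that.

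Part 4) is where your proposal has a genuine gap: the trace-function separation argument is circular in the setting of this paper and would not work as stated. The linear independence from \cite{DLM5} of the functions $Z_M(v,(g,h),\tau)$ holds with $v$ ranging over all of $V$, but a hypothetical isomorphism $M_\l\cong N_\mu$ only forces $Z_{M_\l}(v,\tau)=Z_{N_\mu}(v,\tau)$ for $v\in V^G$, and restricted to $V^G\times\H$ the independence visibly fails: for $v\in V^G$ one has $Z_{M\circ k}(v,(k^{-1}gk,k^{-1}hk),\tau)=Z_M(v,(g,h),\tau)$ (this identity is the first line of the proof of Lemma \ref{conj}). Deciding which further coincidences occur on $V^G$ is exactly the content of part 4); indeed the paper deduces the linear independence of $\{Z_{M^j_\l}(v,\tau)\}$ \emph{from} Theorem \ref{mthm1} (see the remark following Theorem \ref{MVG}), not the other way around, and one cannot fall back on characters alone, since inequivalent irreducible modules can have identical graded characters (e.g.\ the modules $V_{L+j\alpha/2k}$ and $V_{L-j\alpha/2k}$ over a rank-one lattice vertex operator algebra). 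The proofs in \cite{DRX,MT,DY} obtain 4) structurally rather than analytically: the single-module duality of 1)--3) is extended to the orbit sum $\oplus_{h}M\circ h$ (more generally to $\oplus_{M\in\cal S}M$), viewed as a module over a crossed-product/``generalized double'' algebra built from $G$ and $\cal S$, whose simple blocks are indexed by pairs (orbit, $\l$); inequivalence across orbits is then part of the same double-commutant statement. To repair your part 4) you would need that extended duality, or else rationality and $C_2$-cofiniteness of $V^G$ --- which this theorem deliberately avoids assuming.
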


Decompose  $\cal S$ $=\cup_{ j \in J}O_{j}$ into a disjoint union of orbits.
Let $M^j$  for $j\in J$ be the orbit representatives of  $\cal S$,
$O_{j}=\{M^j\circ h\mid h\in G\}$ is  the orbit of $M^j$ under $G$.
For $M\in  \cal S$, recall that $G_M=\{h\in G\mid M\circ h=M\}$.
Let $G=\cup_{i=1}^{l_M} G_M g_i$ be a right coset decomposition of $G$. Assume $g_1=1$ is the identity.
Then $O_M=\{M\circ g_i \mid i=1,\ldots, l_M \}$ and $G_{M\circ g_i}={g_i}^{-1}G_M g_i$. Then we have

 \begin{thm}\label{mthm1}
The set $\{M^j_{\lambda}|j\in J, \lambda\in
\Lambda_{G_{M^j}}\} $ gives a complete list of inequivalent irreducible $V^G$-modules appearing in the irreducible twisted $V$-modules.
Moreover, if $V^G$ is rational and $C_2$-cofinite, this set classifies the irreducible $V^G$-modules.  That is, any irreducible $V^G$-module is isomorphic to an irreducible $V^G$-submodule $M_{\l}$ for some irreducible $g$-twisted $V$-module $M$ and some $\lambda\in \Lambda_{G_M}.$
\end{thm}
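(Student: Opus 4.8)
The plan is to establish the two assertions separately: first that the listed modules form a complete list of inequivalent irreducibles appearing in twisted $V$-modules, and then that under rationality and $C_2$-cofiniteness of $V^G$ these exhaust \emph{all} irreducible $V^G$-modules. The first part is essentially a bookkeeping consequence of Theorem~\ref{mthm1} (the preceding one). For completeness and inequivalence within the list, I would argue as follows. By part 1) of the preceding theorem, each $M^j_\lambda$ is nonzero, and by part 2) it is an irreducible $V^G$-module. For inequivalence, suppose $M^j_\lambda \cong N^{j'}_\mu$ as $V^G$-modules. If $j \neq j'$, then $M^j$ and $N^{j'}$ lie in distinct $G$-orbits of $\cal S$, so part 4) forces inequivalence, a contradiction; hence $j = j'$. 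Then $M^j_\lambda \cong M^j_\mu$, and part 3) gives $\lambda = \mu$. Thus the map $(j,\lambda)\mapsto [M^j_\lambda]$ is injective, and since every irreducible $V^G$-summand of a twisted module arises from some orbit representative (any $M\in\cal S$ satisfies $M\cong M^j\circ h$ for suitable $j,h$, and $M\circ h \cong M^j$ as $V^G$-modules by Lemma~\ref{circ}(3), so the decomposition \eqref{decom} of $M$ yields exactly the modules $M^j_\lambda$), the list is complete within the twisted modules.

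The substance of the theorem is the second claim, that when $V^G$ is rational and $C_2$-cofinite the list accounts for every irreducible $V^G$-module. The natural strategy is a global-dimension or counting argument. Since $V$ is regular and $V^G$ is assumed rational and $C_2$-cofinite, $V^G$ is regular (being of CFT type), so it has finitely many irreducibles and a well-defined $S$-matrix satisfying the Verlinde formula. The idea is to compute the sum of squares of quantum dimensions, i.e.\ the global dimension $\glob(V^G) = \sum_{X} \qdim_{V^G}(X)^2$ over all irreducible $V^G$-modules $X$, and to show that the contributions from the already-identified modules $M^j_\lambda$ already saturate this total. Concretely, I would use the known relation between the quantum dimension of $M_\lambda$ and that of $M$ as a $g$-twisted module together with the dimension of the simple $\C^{\a_M}[G_M]$-module $W_\lambda$: each twisted module $M$ decomposes as in \eqref{decom}, and summing $(\dim W_\lambda)^2$ over $\lambda$ gives $|G_M|$ (a standard identity for twisted group algebras), which matches the orbit-stabilizer bookkeeping $|O_M|\cdot|G_M| = |G|$. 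Combining this across all orbits and all sectors $\mathscr{M}(g)$ should reproduce exactly $\glob(V^G)$, leaving no room for additional irreducibles.

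The key input making the counting work is the relation $\glob(V^G) = |G|\cdot\glob(V)$ (or an analogous formula expressing the global dimension of the orbifold in terms of that of $V$ and $|G|$), which reflects that passing to the $G$-fixed subalgebra multiplies the global dimension by the group order. This type of identity, together with the Schur-orthogonality for the twisted group algebras $\C^{\a_M}[G_M]$ and the modular/$S$-matrix results of Theorem~\ref{minvariance} and Proposition~\ref{pDLN} (in particular positivity and unitarity of the $S$-matrix, which guarantee that quantum dimensions are positive and that the global dimension is a genuine sum of positive terms), pins down the count. I would invoke the references \cite{DJX, DRX} where the relevant quantum-dimension and global-dimension computations for $V^G$ were carried out.

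The hard part will be the completeness argument in the second claim: knowing that the $M^j_\lambda$ are irreducible $V^G$-modules does not by itself preclude the existence of irreducible $V^G$-modules that fail to appear as a summand of any twisted $V$-module. Ruling this out requires a global invariant — the total quantum dimension count — and the delicate point is verifying that the arithmetic of orbit sizes, stabilizer orders, and dimensions of projective-representation summands matches $\glob(V^G)$ \emph{exactly}, with no deficit. Any slack in that identity would permit extra modules, so the crux is the precise bookkeeping of the identity $\sum_{\lambda\in\Lambda_{G_M}}(\dim W_\lambda)^2 = |G_M|$ and its interaction with the orbit decomposition $\cal S = \cup_{j\in J} O_j$; establishing that the resulting total coincides with the independently computed $\glob(V^G)$ is where the real work lies.
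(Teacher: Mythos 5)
You should first note that the paper does not actually prove this theorem: it is imported wholesale from \cite{DRX} (and \cite{MT}), with the sentence preceding the first theorem of Section 4 serving as the attribution. Your opening paragraph is precisely the intended (and correct) derivation of the first assertion from parts 1)--4) of the preceding theorem together with Lemma \ref{circ}(3) and the $\lambda\circ k$ construction, so that portion is fine. Your plan for the substantive second claim also correctly identifies the route actually taken in \cite{DRX}: a quantum-dimension count built on $\qdim_{V^G}M_\lambda=\dim W_\lambda\,[G:G_M]\,\qdim_V M$, the twisted-group-algebra identity $\sum_{\lambda\in\Lambda_{G_M}}(\dim W_\lambda)^2=|G_M|$, and strict positivity of quantum dimensions (which is where hypothesis (V3) and Proposition \ref{pDLN} enter, so that a saturated global dimension leaves no room for further irreducibles).

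The genuine flaw is your key identity: it should be $\glob(V^G)=|G|^2\,\glob(V)$, not $|G|\cdot\glob(V)$, and the factor matters. Carrying out your own bookkeeping with the quantum-dimension formula above gives
\begin{equation*}
\sum_{j\in J}\sum_{\lambda\in\Lambda_{G_{M^j}}}\bigl(\qdim_{V^G}M^j_\lambda\bigr)^2
=|G|\sum_{M\in{\cal S}}\bigl(\qdim_V M\bigr)^2
=|G|\sum_{g\in G}\sum_{M\in\mathscr{M}(g)}\bigl(\qdim_V M\bigr)^2,
\end{equation*}
and \cite{DRX} shows each twisted sector satisfies $\sum_{M\in\mathscr{M}(g)}(\qdim_V M)^2=\glob(V)$, so the listed modules already account for $|G|^2\glob(V)$. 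Had $\glob(V^G)=|G|\glob(V)$ held, the listed irreducibles would \emph{exceed} the global dimension, an impossibility for a sum of positive terms over a subset; so with your normalization the argument collapses rather than closes. A sanity check: $V$ holomorphic and $G=\Z/2\Z$ gives four irreducible $V^G$-modules, all of quantum dimension $1$, so $\glob(V^G)=4=|G|^2$, not $2$. Finally, the two inputs that make the count close --- the per-sector identity just quoted, and $\glob(V^G)=|G|^2\glob(V)$ itself, which \cite{DRX} derives from $\qdim_{V^G}V=|G|$ (quantum Galois theory \cite{DJX}) together with modularity of the $V^G$-module category \cite{H} (this is exactly where rationality and $C_2$-cofiniteness of $V^G$ are used) --- are precisely the steps you defer as ``where the real work lies,'' so as written your proposal is an accurate outline of the cited proof marred by a wrong normalization that must be corrected before the counting argument can be executed.
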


The following Lemma which is straightforward will be useful in the next section.
\begin{lem}\label{GM}  Let $M$ be an irreducible $g$-twisted $V$-module. Then for any $k\in G$ we may take $\alpha_{M\circ k}$ such that $\alpha_{M\circ k}(k^{-1}ak,k^{-1}bk)=\alpha_{M}(a,b)$
for $a,b\in G_M.$ In particular, $\C^{\a_{M\circ k}}[G_{M\circ k}]$ and $\C^{\a_M}[G_M]$ are isomorphic by sending $\bar a$ to $\overline{k^{-1}ak}.$
\end{lem}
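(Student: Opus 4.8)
The plan is to verify directly that the proposed choice of 2-cocycle $\alpha_{M\circ k}$ satisfies the required compatibility relation, and then to check that the induced map on twisted group algebras is an algebra isomorphism. The starting point is the defining relation $\varphi(h)Y(u,z)\varphi(h)^{-1}=Y(hu,z)$ for the projective action of $G_M$ on $M$, together with the observation that $G_{M\circ k}=k^{-1}G_Mk$ (recorded just before Theorem~\ref{mthm1}). The key fact I would use is that $M\circ k\cong M$ as vector spaces with $Y_{M\circ k}(v,z)=Y_M(kv,z)$; hence the projective operators for $M\circ k$ can be produced from those for $M$ by a conjugation by $k$.

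First I would make the identification $M\circ k\cong M$ explicit and define, for each $a\in G_M$, the operator $\varphi_{M\circ k}(k^{-1}ak):=\varphi_M(a)$ acting on the common underlying space. I would then check that this gives a projective representation of $G_{M\circ k}=k^{-1}G_Mk$ intertwining $Y_{M\circ k}$ correctly: for $b=k^{-1}ak\in G_{M\circ k}$ and $u\in V$,
\begin{equation*}
\varphi_{M\circ k}(b)Y_{M\circ k}(u,z)\varphi_{M\circ k}(b)^{-1}
=\varphi_M(a)Y_M(ku,z)\varphi_M(a)^{-1}
=Y_M(aku,z)=Y_{M\circ k}(k^{-1}aku,z)=Y_{M\circ k}(bu,z),
\end{equation*}
which is exactly the intertwining property required of $\varphi_{M\circ k}$. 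This confirms that $a\mapsto\varphi_M(a)$ is a legitimate choice for the projective representation of $G_{M\circ k}$ under the relabelling $a\mapsto k^{-1}ak$.

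Next I would read off the cocycle. For $a,b\in G_M$, set $a'=k^{-1}ak$ and $b'=k^{-1}bk$ in $G_{M\circ k}$. By definition of $\alpha_{M\circ k}$ and the choice above,
\begin{equation*}
\alpha_{M\circ k}(a',b')\varphi_{M\circ k}(a'b')=\varphi_{M\circ k}(a')\varphi_{M\circ k}(b')=\varphi_M(a)\varphi_M(b)=\alpha_M(a,b)\varphi_M(ab)=\alpha_M(a,b)\varphi_{M\circ k}(k^{-1}abk),
\end{equation*}
and since $a'b'=k^{-1}abk$, comparing coefficients yields $\alpha_{M\circ k}(k^{-1}ak,k^{-1}bk)=\alpha_M(a,b)$, which is the asserted relation. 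The final claim about the twisted group algebras is then immediate: the linear map $\C^{\a_M}[G_M]\to\C^{\a_{M\circ k}}[G_{M\circ k}]$ sending $\bar a\mapsto\overline{k^{-1}ak}$ respects products because $\bar a\,\bar b=\alpha_M(a,b)\overline{ab}$ maps to $\alpha_M(a,b)\overline{k^{-1}abk}=\alpha_{M\circ k}(k^{-1}ak,k^{-1}bk)\,\overline{(k^{-1}ak)(k^{-1}bk)}=\overline{k^{-1}ak}\,\overline{k^{-1}bk}$, and it is manifestly bijective on the natural bases.

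The only genuine subtlety — and the step I would treat most carefully — is the phrase ``we may take $\alpha_{M\circ k}$ such that.'' Because $\varphi$ is only determined up to scalars, the cocycle $\alpha_{M\circ k}$ is a priori fixed only up to a coboundary, so the statement is really that the cohomology class admits a representative with this exact form; the argument above amounts to choosing the scalars for $\varphi_{M\circ k}$ to be precisely $\varphi_M(a)$. I would make explicit that this normalization is available (it uses only that $M\circ k$ and $M$ share an underlying space), and I would also remark that the unitarity convention for $\alpha_M$ transports to $\alpha_{M\circ k}$ under this identification, so no separate rescaling is needed. Beyond this bookkeeping the proof is a direct computation, consistent with the author's description of the lemma as ``straightforward.''
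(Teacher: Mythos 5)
Your proof is correct, and since the paper states this lemma without proof (merely calling it ``straightforward''), your direct verification --- defining $\varphi_{M\circ k}(k^{-1}ak):=\varphi_M(a)$ on the common underlying space, checking the intertwining relation $\varphi_{M\circ k}(b)Y_{M\circ k}(u,z)\varphi_{M\circ k}(b)^{-1}=Y_{M\circ k}(bu,z)$, and reading off $\alpha_{M\circ k}(k^{-1}ak,k^{-1}bk)=\alpha_M(a,b)$ --- is exactly the intended computation. Your closing remark, that the phrase ``we may take'' means choosing a representative cocycle in the cohomology class via this normalization of the projective operators, correctly pins down the one genuine subtlety.
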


For any $\lambda\in \Lambda_{G_M}$ we define  $\lambda\circ k\in  \Lambda_{G_{M\circ k}}$ such that $ (\lambda\circ k)(\overline{k^{-1}ak})=
\lambda(\bar{a})$  for  $a\in G_M.$
We denote the irreducible $\C^{\a_{M\circ k}}[G_{M\circ k}]$-module corresponding to $\lambda\circ k$ by $W_{\lambda}\circ k=W_{\lambda\circ k}.$

\section{$S$-matrix}

In this section we investigate the modular transformation among the modular forms $Z_{M^i_\l}(v,\tau)$ for $i\in J$ and $\lambda\in \Lambda_{M^i}$  without the
assumption that $V^G$ is rational and $C_2$-cofinite.  Note that  the modular group
$\Gamma$ is generated by $S$ and $T.$ The $T$-matrix is easy to compute:
$$Z_{M^i_{\l}}(v,\tau+1)=e^{2\pi i(-c/24+h_{i,\l})}Z_{M_{\l}}(v,\tau)$$
where $h_{i,\l}$ is a rational number determined by the weight space decomposition
$$M^i_\l=\bigoplus_{n\geq 0}(M^i_\l)_{h_{i,\l}+n}.$$
That is $T$ is a diagonal unitary matrix with $T_{M^i_{\l},M^i_{\l}}=e^{2\pi i(-c/24+h_{i,\l})}$ for   $i\in J$ and $\l\in\Lambda_{M^i}.$

So our main focus is to show that any
$\tau^{-\wt[v]}Z_{M^i_\l}(v,-\frac{1}{\tau})$
is a linear combination of $Z_{M^j_\mu}(v,\tau)$ for $j\in J$ and $\mu\in\Lambda_{M^j}.$ That is, we will determine the $S_{M^i_\l,M^j_{\mu}}$ in
$$Z_{M^i_\l}(v,-\frac{1}{\tau}) = \tau^{\wt[v]}\sum_{j\in J, \mu\in\Lambda_{M^j}}S_{M^i_\l,M^j_{\mu}}Z_{M^j_\mu}(v,\tau).$$
$(S_{M^i_\l,M^j_{\mu}})_{(i,\lambda), (j,\mu)}$ is called the restricted $S$-matrix of $V^G.$

We need the following lemma which says that the $S$-matrix is invariant under the conjugation.
\begin{lem}\label{conj}
Let $M\in  \mathscr{M}(g, h)$ for commuting $ g,h\in G$.
Assume that $N\in \mathscr{M}(h, g^{-1})$.
For  any $k\in  G$, we have  $S_{M,N}=S_{M\circ k,N\circ k}$.
\end{lem}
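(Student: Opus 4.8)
The plan is to show that both sides of the defining $S$-transformation for $M$ and for $M\circ k$ are governed by the same scalar matrix, by transporting the entire trace-function setup through the automorphism $k$. The key structural fact, established in Lemma~\ref{circ}(3), is that $M$ and $M\circ k$ are isomorphic as $V^G$-modules; more importantly, at the level of the twisted trace functions we have the identity $Z_{M\circ k}(v,(k^{-1}gk,k^{-1}hk),\tau)=Z_M(kv,(g,h),\tau)$, which follows directly from the definition $Y_{M\circ k}(v,z)=Y_M(kv,z)$ together with the compatibility of the projective representations $\varphi$ on $M$ and on $M\circ k$ arranged in Lemma~\ref{GM}. Concretely, since $M\in\mathscr{M}(g,h)$ we have $h\in G_M$, and by Lemma~\ref{GM} the operator implementing $k^{-1}hk$ on $M\circ k$ may be taken to be the same operator that $\varphi(h)$ gives on $M$ under the identification $M\circ k\cong M$; hence $o(v)\varphi(k^{-1}hk)$ acting on $M\circ k$ has the same trace as $o(kv)\varphi(h)$ acting on $M$.

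First I would write the $S$-transformation from Theorem~\ref{minvariance}(1) applied to $M\in\mathscr{M}(g,h)$ with $\gamma=S$, so that $(g,h)S=(h,g^{-1})$ and the sum runs over $N\in\mathscr{M}(h,g^{-1})$:
\begin{equation*}
Z_M(v,(g,h),-\tfrac1\tau)=\tau^{\wt[v]}\sum_{N\in\mathscr{M}(h,g^{-1})}S_{M,N}\,Z_N(v,(h,g^{-1}),\tau).
\end{equation*}
Next I would write the same transformation for $M\circ k\in\mathscr{M}(k^{-1}gk,k^{-1}hk)$, whose target index set is $\mathscr{M}(k^{-1}hk,k^{-1}g^{-1}k)=\{N\circ k\mid N\in\mathscr{M}(h,g^{-1})\}$ by Lemma~\ref{circ}(1)--(2):
\begin{equation*}
Z_{M\circ k}(v,(k^{-1}gk,k^{-1}hk),-\tfrac1\tau)=\tau^{\wt[v]}\sum_{N}S_{M\circ k,N\circ k}\,Z_{N\circ k}(v,(k^{-1}hk,k^{-1}g^{-1}k),\tau).
\end{equation*}

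The heart of the argument is then to substitute $kv$ for $v$ in the first equation and apply the trace identity $Z_M(kv,(g,h),\tau)=Z_{M\circ k}(v,(k^{-1}gk,k^{-1}hk),\tau)$ to both the left-hand side and each summand on the right. This rewrites the first displayed equation, term by term, into an expansion of $Z_{M\circ k}(v,(k^{-1}gk,k^{-1}hk),-\tfrac1\tau)$ in the functions $Z_{N\circ k}(v,(k^{-1}hk,k^{-1}g^{-1}k),\tau)$ with the \emph{same} coefficients $S_{M,N}$. Comparing this with the second displayed equation, and invoking the linear independence of the functions $\{Z_{N\circ k}\}_{N}$ as $N$ ranges over $\mathscr{M}(h,g^{-1})$ (which holds since they span $W$-components attached to a single commuting pair and the dimension count in Section~3 forces independence), I conclude $S_{M,N}=S_{M\circ k,N\circ k}$ for each $N$.

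The main obstacle I anticipate is the bookkeeping of the projective factors: the scalars $\varphi(h)$ and $\varphi(k^{-1}hk)$ are each only defined up to a nonzero constant, and one must verify that the normalizations supplied by Lemma~\ref{GM} make the trace identity hold on the nose rather than merely up to a scalar. This requires checking that the cocycle identification $\alpha_{M\circ k}(k^{-1}ak,k^{-1}bk)=\alpha_M(a,b)$ is compatible with the particular operators appearing in $o(v)\varphi(h)q^{L(0)-c/24}$, and that the $L(0)$-grading (hence the weight $\wt[v]$ and the power of $\tau$) is preserved under $v\mapsto kv$ and under the identification $M\circ k\cong M$. Once the normalizations are pinned down so that the trace identity is an exact equality of holomorphic functions on $\H$, the remainder is the routine comparison of coefficients described above.
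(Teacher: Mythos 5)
Your proposal is correct and takes essentially the same route as the paper's proof: apply Theorem \ref{minvariance}(1) to both $M$ and $M\circ k$, transport the trace functions through $k$ using the cocycle normalization of Lemma \ref{GM}, and compare coefficients via the linear independence of $\{Z_{N}(v,(h,g^{-1}),\tau)\mid N\in\mathscr{M}(h,g^{-1})\}$ from \cite{DLM5}. If anything, your explicit substitution $v\mapsto kv$ in the identity $Z_{M\circ k}(v,(k^{-1}gk,k^{-1}hk),\tau)=Z_M(kv,(g,h),\tau)$ (noting $\wt[kv]=\wt[v]$) is slightly more careful than the paper's shorthand, which writes this equality with the same $v$ on both sides.
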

\begin{proof}
Note that $M\circ k\in  \mathscr{M}(k^{-1}gk, k^{-1}hk)$ and $N\circ k\in  \mathscr{M}(k^{-1}hk, k^{-1}g^{-1}k)$. Using  Theorem \ref{minvariance}  and Lemma \ref{GM} we have the following computation
\begin{equation*}
\begin{split}
Z_M(v, (g,h), -1/\tau)&=Z_{M\circ k}(v, (k^{-1}gk,k^{-1}hk),-1/\tau)\\
&=\tau^{\wt[v]}\sum_{N\in \mathscr{M}(k^{-1}hk, k^{-1}g^{-1}k) }S_{M\circ k,N}Z_{N}(v,(khk^{-1},k^{-1}g^{-1}k),\tau)\\
&=\tau^{\wt[v]}\sum_{N\in \mathscr{M}(h, g^{-1}) }S_{M\circ k,N\circ k}Z_{N\circ k}(v,(khk^{-1},k^{-1}g^{-1}k),\tau)\\
&=\tau^{\wt[v]}\sum_{N\in \mathscr{M}(h, g^{-1}) }S_{M\circ k,N\circ k}Z_{N}(v,(h,g^{-1}),\tau).
\end{split}
\end{equation*}
Comparing this result with
$$Z_M(v, (g,h), -1/\tau=\tau^{\wt[v]}\sum_{N\in \mathscr{M}(h, g^{-1}) }S_{M,N} Z_{N}(v,(h,g^{-1}),\tau)$$
 and using the fact that
$\{Z_{N}(v,(h,g^{-1}),\tau)  \mid  N\in \mathscr{M}(h, g^{-1})\}$ are linearly independent functions on
$V\times \H$  \cite{DLM5} gives the identity
 $S_{M,N}=S_{M\circ k,N\circ k}$.
\end{proof}

The following result essentially gives the $S$-matrix of $V^G.$
\begin{thm}\label{MVG}  Let $M$ be an irreducible $g$-twisted $V$-module.
For $v\in  V^G$,  $\lambda\in \Lambda_{G_M}$, we have
$$
Z_{M_\l}(v, \tau)=\frac{1}{|G_M|}\sum_{h\in G_M}Z_M(v, (g,h),\tau)\overline{\lambda(\overline{h})},
$$
$$Z_{M_\l}(v, -1/\tau)=\frac{\tau^{\wt[v]}}{|G_M|}\sum_{h\in G_M}\sum_{N\in \mathscr{M}(h,g^{-1})}S_{M,N}\sum_{\mu\in\Lambda_{G_N}}\mu(\overline{g^{-1}})Z_{N_{\mu}}(v,\tau)\overline{\lambda(\overline{h})}$$
where $\bar h$ is the  element in twisted group algebra $\C^{\alpha_M}[G_M]$ corresponding to $h\in G_M$ and $\bar {x}$ for a complex number $x$ is the complex conjugate.
\end{thm}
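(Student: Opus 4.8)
The plan is to establish the two displayed formulas separately, beginning with the first (the $\tau$-identity) since it is a finite-group-theoretic projection statement, and then transport it through the $S$-transformation governed by Theorem~\ref{minvariance} to obtain the second.

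For the first formula, recall from the decomposition \eqref{decom} that $M=\oplus_{\lambda\in\Lambda_{G_M}}W_\lambda\otimes M_\lambda$ as a $\C^{\alpha_M}[G_M]\otimes V^G$-module. The key observation is that $\varphi(h)$ acts only on the $W_\lambda$ factors, while $o(v)$ for $v\in V^G$ acts only on the multiplicity spaces $M_\lambda$. Hence $\tr_M o(v)\varphi(h)q^{L(0)-c/24}$ factors as $\sum_{\lambda}\tr_{W_\lambda}\varphi(h)\cdot Z_{M_\lambda}(v,\tau)=\sum_\lambda \lambda(\bar h)Z_{M_\lambda}(v,\tau)$, where $\lambda(\bar h)$ is the character value of the irreducible $\C^{\alpha_M}[G_M]$-module $W_\lambda$ on $\bar h$. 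This gives $Z_M(v,(g,h),\tau)=\sum_\lambda \lambda(\bar h)Z_{M_\lambda}(v,\tau)$. I would then invert this relation using the (twisted) orthogonality of characters of the semisimple algebra $\C^{\alpha_M}[G_M]$: multiplying by $\overline{\lambda(\bar h)}$ and summing over $h\in G_M$, the Schur orthogonality relations $\frac{1}{|G_M|}\sum_{h}\lambda(\bar h)\overline{\mu(\bar h)}=\delta_{\lambda\mu}$ isolate the single term $Z_{M_\lambda}(v,\tau)$, yielding the first formula. I would need to confirm that twisted-group-algebra characters satisfy the same orthogonality relations as ordinary ones, which is standard since $\C^{\alpha_M}[G_M]$ is semisimple with the $W_\lambda$ its simple modules.

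For the second formula, I would start from the first formula evaluated at $-1/\tau$, namely $Z_{M_\lambda}(v,-1/\tau)=\frac{1}{|G_M|}\sum_{h\in G_M}Z_M(v,(g,h),-1/\tau)\overline{\lambda(\bar h)}$. Now I apply Theorem~\ref{minvariance}(1) with $\gamma=S$ to each $Z_M(v,(g,h),-1/\tau)$: since $(g,h)S=(h,g^{-1})$, this expands as $\tau^{\wt[v]}\sum_{N\in\mathscr{M}(h,g^{-1})}S_{M,N}Z_N(v,(h,g^{-1}),\tau)$. The crucial remaining step is to rewrite each $Z_N(v,(h,g^{-1}),\tau)$ in terms of the $V^G$-module trace functions $Z_{N_\mu}(v,\tau)$. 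By the first formula of this theorem applied to $N$ (which is $h$-twisted, with $g^{-1}\in G_N$ since $N\in\mathscr{M}(h,g^{-1})$ forces $g^{-1}$ to stabilize $N$), I have $Z_N(v,(h,g^{-1}),\tau)=\sum_{\mu\in\Lambda_{G_N}}\mu(\overline{g^{-1}})Z_{N_\mu}(v,\tau)$. Substituting and collecting the sums over $h$, $N$, and $\mu$ produces exactly the claimed expression.

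\textbf{The main obstacle} I anticipate is the careful bookkeeping of which twisted sectors and stabilizer subgroups are in play, and verifying that $g^{-1}\in G_N$ so that $\mu(\overline{g^{-1}})$ is well-defined for $\mu\in\Lambda_{G_N}$. One must check that the membership $N\in\mathscr{M}(h,g^{-1})$ genuinely means $N$ is $g^{-1}$-stable, placing $g^{-1}$ in $G_N$, so that the inner expansion via the first formula is legitimate. A secondary subtlety is the projective nature of the representation $\varphi$: the character values $\lambda(\bar h)$ live on the twisted group algebra, and the orthogonality relation used in the inversion step must be the one for $\C^{\alpha_M}[G_M]$ rather than for the ordinary group algebra. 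Both points are manageable, but they are where an incorrect argument would most easily slip in, so I would state them explicitly rather than treating the character sum as purely formal.
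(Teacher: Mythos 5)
Your proposal is correct and follows essentially the same route as the paper's proof: the first identity via the decomposition $M=\oplus_{\lambda\in\Lambda_{G_M}}W_\lambda\otimes M_\lambda$ and orthogonality of irreducible characters of the twisted group algebra $\C^{\alpha_M}[G_M]$, and the second by applying the $S$-transformation of Theorem \ref{minvariance} and then re-expanding each $Z_N(v,(h,g^{-1}),\tau)$ via the uninverted first formula. Your explicit checks that $(g,h)S=(h,g^{-1})$ and that $g^{-1}\in G_N$ for $N\in\mathscr{M}(h,g^{-1})$ are precisely the points the paper uses implicitly.
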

\begin{proof}
Recall $M=\oplus_{\lambda\in \Lambda_{G_M}}W_{\l}\otimes M_{\lambda}$.
We have
\begin{equation*}
\begin{split}
\frac{1}{|G_M|}\sum_{h\in G_M}Z_M(v, (g,h),\tau)\overline{\lambda(\overline{h})}
=&\frac{1}{|G_M|}\sum_{h\in G_M}\tr_M\overline{h} o(v)q^{L(0)-\frac{c}{24}}\overline{\lambda(\overline{h})}\\
=&\frac{1}{|G_M|}\sum_{h\in G_M}\sum_{\mu\in \Lambda_{G_M}}(\tr_{W_{\mu}}\overline{h} )(\tr_{M_{\mu}}o(v))q^{L(0)-\frac{c}{24}}\overline{\lambda(\overline{h})}\\
=&\frac{1}{|G_M|}\sum_{h\in G_M}\sum_{\mu\in \Lambda_{G_M}}
\mu(\overline{h} )\overline{\lambda(\overline{h})}Z_{M_{\mu}}(v,\tau)
\end{split}
\end{equation*}
Using the the orthogonality property of the irreducible characters of $\C^{\a_M}[G_M]$ gives the first equality.

For the second equality we use Theorem  \ref{minvariance} and the first equation to see that
\begin{equation*}
\begin{split}
Z_{M_\l}(v, -1/\tau)=&\frac{\tau^{\wt[v]}}{|G_M|}\sum_{h\in G_M}\sum_{N\in \mathscr{M}(h,g^{-1})}S_{M,N}Z_N(v,(h,g^{-1}),\tau)\overline{\lambda(\overline{h})}\\
                                =&\frac{\tau^{\wt[v]}}{|G_M|}\sum_{h\in G_M}\sum_{N\in \mathscr{M}(h,g^{-1})}S_{M,N}\sum_{\mu\in\Lambda_{G_N}}\mu(\overline{g^{-1}})\overline{{\lambda(\overline{h})}}Z_{N_{\mu}}(v,\tau),
\end{split}
\end{equation*}
as desired.
\end{proof}

From \cite{Z} and Theorem \ref{mthm1}, $\{Z_{M^j_{\l}}(v,\tau)|j\in J, \l\in \Lambda_{G_{M^j}}\}$  are linearly independent vectors in the conformal block
of $V^G.$  According to the orbifold theory conjecture, these vectors are expected  to form a basis of the conformal block. Note that each $Z_M(v,(g,h),\tau)$ for $M\in  \mathscr{M}(g, h)$
with $g,h\in G$ commuting is also a vector in the conformal block of $V^G.$
The first equation in Theorem \ref{MVG} in fact implies the subspace of the conformal block spanned  $Z_{M^j_{\l}}(v,\tau)$  is equal to the the subspace
spanned by $Z_{M^j}(v, (g_j,h),\tau)$ for $j\in J$ where $M^j$ is a $g_j$-twisted $V$-module.

Taking  $M=M^i$ in Theorem \ref{MVG} we can find the  $S_{M^i_\l,M^j_{\mu}}$ after the identification of $N$ appearing in $Z_{M_\l}(v, -1/\tau)$ with $M^j.$  To give an explicit formula
for $S_{M^i_\l,M^j_{\mu}}$ we introduce some notations. Let $C_{i,j}$ be an least subset of $G$ such that
$$\{M^j\circ k |k\in C_{i,j}\}=O_j\cap (\cup_{h\in G_{M^i}} \mathscr{M}(h, g_i^{-1})).$$
 Clearly, the choice of $C_{i,j}$ is not canonical and  $C_{i,j}$ could be an empty set.
 Note that $V\circ h\cong V$ for all $h\in G.$ So $V$ itself is an orbit with $G_V=G.$ We assume that $0\in J$ and $M^0=V.$ In this case $C_{0,j}$ consists of $k\in G$ such that
$M^j\circ k$  gives all the modules in orbit $O_j.$

Here is an explicit expression of the $S$-matrix of $V^G.$
\begin{thm}\label{ZVG}
Let $i,j\in J$ and $\lambda\in\Lambda_{G_{M^i}}$ and $\mu\in \Lambda_{G_{M^j}}.$
Then
$$S_{M^i_\l,M^j_\mu}=\frac{1}{|G_{M^i}|}\sum_{k\in C_{i,j}}S_{M^i,M^j\circ k}\overline{\lambda(\overline{k^{-1}g_jk})}\mu(\overline{kg_i^{-1}k^{-1}})$$
if $C_{i,j}$ is not empty, and $S_{M^i_\l,M^j_\mu}=0$ otherwise.
Moreover, if $i=0$ we have a simple formula:
$$S_{M^0_\l,M^j_\mu}=\frac{1}{|G_{M^j}|}S_{M^0,M^j}\lambda(g_j^{-1})\dim W_{\mu}$$
for all $j.$
\end{thm}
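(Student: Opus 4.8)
The plan is to specialize Theorem~\ref{MVG} to $M=M^i$, match the summands on the right-hand side against the basis vectors $Z_{M^j_\mu}(v,\tau)$, and read off the coefficient $S_{M^i_\l,M^j_\mu}$. The second equation of Theorem~\ref{MVG} expresses $Z_{M^i_\l}(v,-1/\tau)$ as a double sum over $h\in G_{M^i}$ and over $N\in\mathscr{M}(h,g_i^{-1})$, further expanded over $\mu\in\Lambda_{G_N}$. Since $\{Z_{M^j_\mu}(v,\tau)\}$ are linearly independent (by \cite{Z} together with Theorem~\ref{mthm1}), the coefficient of a fixed $Z_{M^j_\mu}(v,\tau)$ is obtained by collecting exactly those terms $N$ in the sum that are isomorphic as $V^G$-modules to $M^j$. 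By Theorem~\ref{mthm1}(4) these are precisely the $N$ lying in the orbit $O_j$; and the constraint $N\in\mathscr{M}(h,g_i^{-1})$ for some $h\in G_{M^i}$ is exactly what the set $C_{i,j}$ parametrizes. Thus if $C_{i,j}=\emptyset$ no term survives and $S_{M^i_\l,M^j_\mu}=0$, which handles the vanishing case immediately.

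**Carrying out the bookkeeping.**
First I would parametrize the surviving $N$ by $k\in C_{i,j}$, writing $N=M^j\circ k$. Lemma~\ref{conj} gives the crucial reduction $S_{M^i,N}=S_{M^i,M^j\circ k}$, so the twisted $S$-matrix entry depends only on the orbit representative. Next I must track two scalar factors attached to each surviving term. One is $\overline{\lambda(\overline{h})}$; here $N=M^j\circ k\in\mathscr{M}(h,g_i^{-1})$ forces $h=k^{-1}g_jk$ (the first slot of the twist pair for $M^j\circ k$), so this factor becomes $\overline{\lambda(\overline{k^{-1}g_jk})}$. The other is $\mu(\overline{g_i^{-1}})$ coming from the character of $N_\mu$; using Lemma~\ref{GM} to transport $\alpha_N$ and the character $\mu$ back to $M^j$ via conjugation by $k$, and recalling the definition $\mu\circ k$ with $(\mu\circ k)(\overline{k^{-1}ak})=\mu(\bar a)$, this factor rewrites as $\mu(\overline{kg_i^{-1}k^{-1}})$. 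Assembling these with the prefactor $\tfrac{1}{|G_{M^i}|}$ yields the stated formula.

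**The main obstacle.**
The delicate point is the second scalar factor and the reindexing over characters $\mu\in\Lambda_{G_N}$: one must be careful that the $\mu$ appearing in $Z_{N_\mu}$ is a character of $\C^{\alpha_N}[G_N]$ with $G_N=k^{-1}G_{M^j}k$, whereas the $\mu$ in the final answer is a character of $\C^{\alpha_{M^j}}[G_{M^j}]$. Reconciling these via the isomorphism of Lemma~\ref{GM} (sending $\bar a\mapsto\overline{k^{-1}ak}$) is exactly where errors of conjugation direction and complex conjugation creep in, and I expect this to be the step requiring the most care. A subtlety is also that different $k\in C_{i,j}$ may contribute distinct $h=k^{-1}g_jk$, so the sum over $h\in G_{M^i}$ in Theorem~\ref{MVG} collapses correctly onto the sum over $k\in C_{i,j}$ only because $C_{i,j}$ is chosen minimal; I would verify that this minimality prevents double-counting.

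**The special case $i=0$.**
For $M^0=V$ we have $g_0=1$ and $G_{M^0}=G$, and $V$ is a single orbit, so every irreducible module in $O_j$ arises as $M^j\circ k$; thus $C_{0,j}$ may be taken to be a transversal realizing all of $O_j$, of size $[G:G_{M^j}]=|G|/|G_{M^j}|$. With $g_i^{-1}=1$, the factor $\mu(\overline{kg_i^{-1}k^{-1}})=\mu(\bar 1)=\dim W_\mu$ is constant, and $\lambda$ is a linear character of $G$ (since $\alpha_V$ is trivial and $V$ is $1$-dimensional as a $\C[G]$-isotypic piece), giving $\overline{\lambda(\overline{k^{-1}g_jk})}=\lambda(g_j^{-1})$ independent of $k$ after using that $\lambda$ is a class function invariant under conjugation. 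Likewise $S_{M^0,M^j\circ k}=S_{V,M^j\circ k}=S_{V,M^j}$ by Lemma~\ref{conj}. Summing the $|C_{0,j}|=|G|/|G_{M^j}|$ identical terms against the prefactor $1/|G|$ produces $\tfrac{1}{|G_{M^j}|}S_{M^0,M^j}\lambda(g_j^{-1})\dim W_\mu$, as claimed.
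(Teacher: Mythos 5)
Your proposal follows essentially the same route as the paper: specialize Theorem \ref{MVG} to $M=M^i$, reindex the surviving terms by $k\in C_{i,j}$ via $N=M^j\circ k$ and $h=k^{-1}g_jk$, transport $\mu$ through $\mu\circ k$ as in Lemma \ref{GM}, compare coefficients using linear independence, and invoke Lemma \ref{conj} at $i=0$ to get $S_{M^0,M^j\circ k}=S_{M^0,M^j}$. Two harmless slips: citing Lemma \ref{conj} for $S_{M^i,N}=S_{M^i,M^j\circ k}$ in the general case is vacuous (that is just the substitution $N=M^j\circ k$, and the entry genuinely depends on $k$), and in the $i=0$ case $\lambda$ need not be linear --- it is an arbitrary irreducible character of $G$ --- but your argument survives because only the class-function property and the identity $\overline{\lambda(g_j)}=\lambda(g_j^{-1})$, valid for all characters, are actually used.
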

\begin{proof}
We first note that  $\cup_{h\in G_{M^i}} \mathscr{M}(h, g_i^{-1})=\{M^j\circ k |k\in C_{i,j}, j\in J\}.$   Now in Theorem  \ref{MVG}, replacing $N, h$ by $M^j\circ k, k^{-1}g_jk,$ respectively gives
$$Z_{M^i_\l}(v, -1/\tau)\\
=\frac{\tau^{\wt[v]}}{|G_{M^i}|}\sum_{j\in J}\sum_{k\in C_{i,j}}\sum_{\mu\in \Lambda_{G_{M^j\circ k}}}S_{M^i,M^j\circ k}\overline{{\lambda(\overline{k^{-1}g_jk})}}{\mu(\overline {g_i^{-1}})}Z_{(M^j\circ k)_{\mu}}(v,\tau).$$
Let $\mu\in \Lambda_{G_{M^j}}$ and $k\in C_{i,j}.$ Recall from Section 3 that $\mu\circ k\in  \Lambda_{G_{M^j\circ k}}$ such that $(\mu\circ k)(\overline{k^{-1}ak})=\mu(\overline{a})$ for $a\in G_{M^j}.$
In particular, $(\mu\circ k)(\overline{ g_i^{-1}})
= \mu(\overline{kg_i^{-1}k^{-1}}).$
Since $\Lambda_{G_{M^j\circ k}}= \Lambda_{G_{M^j}}\circ k$ and $Z_{(M^j\circ k)_{\mu\circ k}}(v,\tau)=Z_{M^j_{\mu}}(v,\tau)$ for $\mu\in  \Lambda_{G_{M^j}}$
we have
$$Z_{M^i_\l}(v, -1/\tau)\\
=\frac{\tau^{\wt[v]}}{|G_{M^i}|}\sum_{j\in J}\sum_{k\in C_{i,j}}\sum_{\mu\in \Lambda_{G_{M^j}}}S_{M^i,M^j\circ k}\overline{{\lambda(\overline{k^{-1}g_jk})}}\mu(\overline {kg_i^{-1}k^{-1}})Z_{M^j_{\mu}}(v,\tau)$$
or equivalently,
$$S_{M^i_\l,M^j_\mu}=\frac{1}{|G_{M^i}|}\sum_{k\in C_{i,j}}S_{M^i,M^j\circ k}\overline{\lambda(\overline{k^{-1}g_jk})}\mu(\overline{kg_i^{-1}k^{-1}}).$$

If $i=0$ then $g_0=1,$
$|C_{0,j}|=\frac{|G|}{|G_{M^j}|},$
$$\overline{\lambda(\overline{k^{-1}g_jk})}=\overline{\lambda(k^{-1}g_jk)}=\overline{\lambda(g_j)}=\lambda(g_j^{-1}),$$
$\mu(\overline {kg_0^{-1}k^{-1}})=\dim W_{\mu}.$
Using Lemma \ref{conj}  yields
$$S_{M^0,M^j\circ k}=S_{M^0\circ k,M^j\circ k}=S_{M^0,M^j}$$
and
$$S_{M^0_\l,M^j_\mu}=\frac{1}{|G_{M^j}|}S_{M^0,M^j}\lambda(g_j^{-1})\dim W_{\mu}.$$
The proof is complete.
\end{proof}

\begin{coro}\label{coro1} Assume that $G$ is abelian.  Let $i,j, \lambda, \mu$ be as before. Then
$$S_{M^i_\l,M^j_\mu}=\frac{1}{|G_{M^i}|}\sum_{k\in C_{i,j}}S_{M^i,M^j\circ k}\overline{\lambda(\overline{g_j})}\mu(\overline{g_i^{-1}})$$
if $C_{i,j}$ is not empty, and $S_{M^i_\l,M^j_\mu}=0$ otherwise.
\end{coro}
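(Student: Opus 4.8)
The plan is to obtain the corollary as a direct specialization of Theorem \ref{ZVG}, exploiting the single fact that conjugation in an abelian group is trivial. Recall that Theorem \ref{ZVG} already gives
$$S_{M^i_\l,M^j_\mu}=\frac{1}{|G_{M^i}|}\sum_{k\in C_{i,j}}S_{M^i,M^j\circ k}\overline{\lambda(\overline{k^{-1}g_jk})}\mu(\overline{kg_i^{-1}k^{-1}})$$
when $C_{i,j}\ne\emptyset$, and $S_{M^i_\l,M^j_\mu}=0$ otherwise. The vanishing case carries over verbatim, so I only need to treat the nonempty case, where the entire task is to simplify the two conjugated group elements that appear inside the character evaluations.

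The key step is to observe that since $G$ is abelian we have $k^{-1}g_jk=g_j$ and $kg_i^{-1}k^{-1}=g_i^{-1}$ as elements of $G$, for every $k\in G$. The bar map $h\mapsto\bar h$ sends each group element to the corresponding basis vector of the relevant twisted group algebra, so equality of group elements forces equality of the associated basis vectors: $\overline{k^{-1}g_jk}=\overline{g_j}$ in $\C^{\alpha_{M^i}}[G_{M^i}]$ and $\overline{kg_i^{-1}k^{-1}}=\overline{g_i^{-1}}$ in $\C^{\alpha_{M^j}}[G_{M^j}]$. Substituting these two identities into the displayed formula of Theorem \ref{ZVG} then yields at once
$$S_{M^i_\l,M^j_\mu}=\frac{1}{|G_{M^i}|}\sum_{k\in C_{i,j}}S_{M^i,M^j\circ k}\overline{\lambda(\overline{g_j})}\mu(\overline{g_i^{-1}}),$$
which is the claimed identity.

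The only point that I would check, and the closest thing to an obstacle, is that the simplified character values remain well defined, i.e. that $g_j\in G_{M^i}$ and $g_i^{-1}\in G_{M^j}$ for each $k\in C_{i,j}$. This is forced by the definition of $C_{i,j}$: the requirement $M^j\circ k\in\mathscr{M}(h,g_i^{-1})$ for some $h\in G_{M^i}$ identifies $h$ with the twist $k^{-1}g_jk=g_j$ of $M^j\circ k$, whence $g_j\in G_{M^i}$, while $g_i^{-1}$-stability of $M^j\circ k$ together with $G_{M^j\circ k}=k^{-1}G_{M^j}k=G_{M^j}$ (again by commutativity) gives $g_i^{-1}\in G_{M^j}$. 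I expect no genuine difficulty here: the whole content of the corollary is the collapse of conjugation in an abelian group, and once that is recorded the substitution into Theorem \ref{ZVG} is routine.
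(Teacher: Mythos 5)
Your proposal is correct and follows exactly the paper's own proof: the paper likewise deduces the corollary from Theorem \ref{ZVG} by noting that, since $G$ is abelian, $k^{-1}g_jk=g_j$ and $kg_i^{-1}k^{-1}=g_i^{-1}$, and then substituting. Your additional check that $g_j\in G_{M^i}$ and $g_i^{-1}\in G_{M^j}$ (so the character values $\lambda(\overline{g_j})$ and $\mu(\overline{g_i^{-1}})$ are well defined) is a sound point of care that the paper leaves implicit in the definition of $C_{i,j}$.
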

\begin{proof}
As $G$ is abelian, $k^{-1}g_jk=g_j$ and $kg_i^{-1}k^{-1}=g_i^{-1}.$  The result follows from Theorem  \ref{ZVG} immediately.
\end{proof}

Next we assume that $G$ is holomorphic. Then for each $g\in G$ there is a unique irreducible $g$-twisted $V$-module $V(g)$ \cite{DLM5} and $G_{V(g)}=C_G(g).$
In this case $\{g_i|i\in J\}$ is a set of conjugacy class representatives, $M^i=V(g_i),$  $V(g)\circ k=V(k^{-1}gk)$ and $|C_{i,j}|$ is exactly the cardinality of the
intersection of the conjugate class of $g_j$ in $G$ with $C_G(g_i).$ If  we also assume that $G$ is abelian, then  $J=G,$ then $C_{g,h}=\{h\}$ and $C_G(g)=G.$

\begin{coro}\label{coro2} Let $V$ be holomorphic and $i,j, \lambda, \mu$ be as before. We have
$$S_{V(g_i)_\l,V(g_j)_\mu}=\frac{1}{|C_G(g_i)|}\sum_{k\in C_{i,j}}S_{V(g_i),V(k^{-1}g_jk)}\overline{\lambda(\overline{k^{-1}g_jk})}\mu(\overline{kg_i^{-1}k^{-1}}).$$
If $G$ is abelian the formula simplifies to
$$S_{V(g)_\l,V(h)_\mu}=\frac{1}{|G|}S_{V(g),V(h)}\overline{\lambda(\overline{h})}\mu(\overline{g^{-1}}).$$
\end{coro}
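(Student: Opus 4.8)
The plan is to read off Corollary~\ref{coro2} as the holomorphic specialization of Theorem~\ref{ZVG}, so that no new modular-invariance input is required. First I would record the facts that hold when $V$ is holomorphic, all noted in the paragraph preceding the corollary: each $\mathscr{M}(g)$ is the singleton $\{V(g)\}$; the orbit representatives may be taken as $M^i=V(g_i)$ with $\{g_i\mid i\in J\}$ a set of conjugacy-class representatives of $G$; the stabilizer is $G_{M^i}=C_G(g_i)$; and $V(g_j)\circ k=V(k^{-1}g_jk)$. With these identifications the index set $C_{i,j}$ becomes the least subset of $G$ for which $\{V(k^{-1}g_jk)\mid k\in C_{i,j}\}$ exhausts $O_j\cap(\cup_{h\in C_G(g_i)}\mathscr{M}(h,g_i^{-1}))$, so that $|C_{i,j}|$ is the number of conjugates of $g_j$ lying in $C_G(g_i)$.

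Substituting $|G_{M^i}|=|C_G(g_i)|$ and $M^j\circ k=V(k^{-1}g_jk)$ directly into the formula of Theorem~\ref{ZVG},
$$S_{M^i_\l,M^j_\mu}=\frac{1}{|G_{M^i}|}\sum_{k\in C_{i,j}}S_{M^i,M^j\circ k}\overline{\lambda(\overline{k^{-1}g_jk})}\mu(\overline{kg_i^{-1}k^{-1}}),$$
yields the first displayed identity verbatim. The only thing to check here is that each summand is meaningful, i.e.\ that the entry $S_{V(g_i),V(k^{-1}g_jk)}$ is defined: this is guaranteed because $k\in C_{i,j}$ forces $h:=k^{-1}g_jk\in C_G(g_i)$, whence $V(g_i)\in\mathscr{M}(g_i,h)$ and its $S$-transform lands in $\mathscr{M}(h,g_i^{-1})=\{V(k^{-1}g_jk)\}$, matching the pairing used in Theorem~\ref{minvariance}.

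For the abelian refinement I would invoke the extra facts $C_G(g)=G$, $J=G$, and that each $C_{g,h}$ is a single-element subset of $G$. Since $G$ is abelian every conjugation is trivial, so $k^{-1}g_jk=g_j=h$ and $kg_i^{-1}k^{-1}=g_i^{-1}=g^{-1}$ \emph{independently of the chosen representative} $k$; the one-term sum therefore collapses and, using $|C_G(g_i)|=|G|$, gives
$$S_{V(g)_\l,V(h)_\mu}=\frac{1}{|G|}S_{V(g),V(h)}\overline{\lambda(\overline h)}\mu(\overline{g^{-1}}),$$
which is the asserted simplified formula.

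There is no substantial obstacle: the corollary is a pure specialization and the entire content is the bookkeeping of the holomorphic identifications, exactly parallel to the reduction from Theorem~\ref{ZVG} to Corollary~\ref{coro1} in the abelian case. The single subtle point, which I would flag explicitly, is that in the abelian case the representative $k$ in the singleton $C_{g,h}$ is immaterial precisely because conjugation acts trivially, so that the value $S_{V(g)_\l,V(h)_\mu}$ is well defined even though the set $C_{g,h}$ itself is not canonical.
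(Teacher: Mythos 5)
Your proposal is correct and matches the paper's treatment: the paper states Corollary~\ref{coro2} without a separate proof, precisely because it is the direct specialization of Theorem~\ref{ZVG} via the holomorphic identifications $M^i=V(g_i)$, $G_{V(g_i)}=C_G(g_i)$, $V(g_j)\circ k=V(k^{-1}g_jk)$, and, in the abelian case, $J=G$, $C_{g,h}=\{h\}$, $C_G(g)=G$ --- exactly the substitutions you perform. Your added remarks (why $S_{V(g_i),V(k^{-1}g_jk)}$ is well defined, and why the non-canonical choice of $C_{g,h}$ is immaterial when $G$ is abelian) are correct and consistent with the paper's conventions.
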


\section{Permutation orbifolds}

Our next example comes from the cyclic permutation orbifolds from \cite{BDM}.
Let $V$ be a vertex operator algebra satisfying conditions (V1)-(V3) with $G$ being automorphism group of $V$ .
For a fixed  prime positive integer $k$,
consider the tensor product vertex operator algebra $U=V^{ \otimes k}$ \cite{FHL}.
Any element $g$ of the permutation group $S_k$ acts on $U$ in the obvious way.  Take $g=(1,2,\ldots,k)$ to be be a $k$-cycle permutation of the vertex operator algebra .
Our goal is to determine the $S$ matrix of $(V^{\otimes k})^G$ where $G$ is the cyclic group generated
by $g.$ According to Corollary \ref{coro1} we only need to know the $S$-matrix in Theorem \ref{minvariance}. In the permutation orbifolds, we can use the the action of
the modular group on the conformal block  of $V$ to give an explicit formula of $S$-matrix of $(V^{\otimes k})^G$ due to the connection between $V$-modules and $g$-twisted
$(V^{\otimes k})^G$-modules \cite{BDM}.

Suppose that $M^0, M^1,\ldots, M^p$ are the irreducible $V$-modules up to isomorphism.
Then $V^{ \otimes k}$ is ratioanl and all irreducible modules are
$$\{M^{i_1}\otimes M^{i_2}\cdots \otimes M^{i_k}\mid i_1,\ldots,
 i_k \in\{0,\ldots ,p\} \}.$$
It is known from \cite{DLM5} that the number of irreducible $g$-stable $V^{ \otimes k}$-modules is equal to the number of of irreducible $V$ -modules and
the number of irreducible  $g$-twisted $V^{ \otimes k}$-modules up to isomorphism is equal to the number of  irreducible
$V$ -modules.
It is proved in \cite{BDM} that there is equivalent functor $T_g$ from the category of  $V$ -module category to the $g$-twisted  $V^{ \otimes k}$-module category.
In particular, $T_g^k(M^0),T_g^k(M^1),\ldots, T_g^k(M^p)$ are the irreducible  $g$-twisted $V^{ \otimes k}$-modules.
There is $\frac{1}{k}\Z_+$-gradation on $T_g^k(M^i)$ such that
$T_g^k(M^i)=\oplus _{n\geq 0 }T_g^k(M^i)(\frac{n}{k})$ and $T_g^k(M^i)(\frac{n}{k}) \cong M^i(n)$ as vector space, and
$Y_g(v,z)=\sum_{m\in \frac{1}{k}\Z}v_mz^{-m-1}$ for $v\in V^{\otimes k}.$

For $v\in V$ denote by $v^j\in V^{ \otimes k}$ the vector whose $j$-th tensor factor is $v$ and whose other tensor factors are 1.
Then $gv^j=v^{j+1}$ for $j=1,\ldots, k$,  where $k+1$ is understood to be 1.
Let $W$ be a $g$-twisted $V^{ \otimes k}$-module, and let $\eta=e^{-2\pi i /k}$.
Then
\[
Y_g(v^{j+1},z)=\lim_{z^{1/k}\to \eta^{-j}z^{1/k} }Y_g(v^1,z).
\]
Since $V^{ \otimes k}$ is generated by $v^j$ for $v\in V$ and $j=1,2,\ldots, k$, the vertex operaotrs $Y_g(v^1,z)$ for $v\in V $
determined all the vertex operators $Y_g(u,z)$ on $W$ for any $u\in V^{ \otimes k}$.

In $(\End V)[[z^{1/k}, z^{-1/k}]],$  define $\Delta_k(z)=exp\big(\sum_{j\in \Z_+}a_j z^{-j/k}L(j)\big)k^{L(0)}z^{(1/k-1)L(0)}$ (see \cite{BDM} for detials).
Let $M$ be a  $V$-module. The action of $V^{ \otimes k}$ on $T_g(M)$  is uniquely determined by
$Y_g(v^1,z)=Y(\Delta_k(z)v, z^{1/k})$ for $v\in V.$
If $v\in V_n$ is a highest weight vector then $\Delta_k(z)v=k^{-n}z^{(1/k-1)n}v$ and $Y_g(v^1,z)=k^{-n}z^{(1/k-1)n}Y(v, z^{1/k})$.

For the Virasoro vector  $\omega$  of $V$, $\Delta_k(z)\omega=\frac{z^{2(1/k-1)}}{k^2}(\omega+\frac{(k^2-1)c}{24}z^{-2/k})$.
We write $Y_{g}(\bar{\omega}, z)=\sum_{n \in \mathbb{Z}} L_{g}(n) z^{-n-2},$
where $\bar{v}=\sum_{j=1}^{k}v^{j}$ for any $v\in V.$
We have $Y_{g}(\bar{\omega}, z)=\sum_{i=0}^{k-1} \lim _{z^{1 / k} \mapsto \eta^{-i} z^{1 / k}} Y_{g}\left(\omega^{1}, z\right).$
It follows that $L_{g}(0)=\frac{1}{k} L(0)+\frac{\left(k^{2}-1\right) c}{24 k}.$

\begin{lem}\label{two}
Let $v\in V$, then $o(\bar{v})=ko(v^1)$.
Moreover, if $v\in V_n$ is a highest weight vector for the Virasoro algebra, then
$o(\bar{v})=k^{-n+1}o(v).$
\end{lem}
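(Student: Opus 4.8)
The plan is to reduce both identities to a root-of-unity filter applied to the single generating series $Y_g(v^1,z)$, using the relation $Y_g(v^{j+1},z)=\lim_{z^{1/k}\to\eta^{-j}z^{1/k}}Y_g(v^1,z)$ supplied above. By linearity it suffices to treat homogeneous $v$, so assume $v\in V_n$; then $v^1$ and $\bar v=\sum_{j=1}^k v^j$ both have weight $n$, and I recall that $o(u)$ is the coefficient of $z^{-\wt u}=z^{-n}$ in $Y_g(u,z)$.

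First I would prove $o(\bar v)=k\,o(v^1)$ with no highest-weight hypothesis. Writing $Y_g(v^1,z)=\sum_{s\in\frac1k\Z}(v^1)_s z^{-s-1}$ and substituting $z^{1/k}\mapsto\eta^{-(j-1)}z^{1/k}$, each monomial $z^{-s-1}$ (with $s=t/k$, $t\in\Z$) acquires a factor $\eta^{(j-1)k(s+1)}=\eta^{(j-1)(t+k)}=\eta^{(j-1)t}$, the last step because $\eta^k=1$. Hence
$$Y_g(v^j,z)=\sum_{t\in\Z}(v^1)_{t/k}\,\eta^{(j-1)t}\,z^{-t/k-1},$$
and summing over $j=1,\dots,k$ collapses the geometric sum $\sum_{j=1}^k\eta^{(j-1)t}=k$ when $k\mid t$ and $0$ otherwise, so that
$$Y_g(\bar v,z)=\sum_{j=1}^k Y_g(v^j,z)=k\sum_{m\in\Z}(v^1)_m\,z^{-m-1}.$$
Extracting the coefficient of $z^{-n}$ on both sides gives $o(\bar v)=k\,(v^1)_{n-1}=k\,o(v^1)$, which is the first assertion (and explains conceptually why the fractional modes drop out: $\bar v$ is $g$-invariant, so $Y_g(\bar v,z)$ has only integral powers of $z$).

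For the second assertion I would feed in the highest-weight formula $Y_g(v^1,z)=k^{-n}z^{(1/k-1)n}Y(v,z^{1/k})$ recorded above. Expanding $Y(v,z^{1/k})=\sum_{m\in\Z}v_m z^{(-m-1)/k}$ produces the exponent $(n-m-1)/k-n$ on $z$, and the coefficient of $z^{-n}$ is picked out precisely by $m=n-1$, so $o(v^1)=k^{-n}v_{n-1}=k^{-n}o(v)$. Combining this with $o(\bar v)=k\,o(v^1)$ yields $o(\bar v)=k^{-n+1}o(v)$. The only delicate point is the bookkeeping of fractional powers of $z$: one must verify that the rescaling factor $\eta^{(j-1)k(s+1)}$ really collapses to $\eta^{(j-1)t}$ through $\eta^k=1$, and that the coefficient extraction is performed at the exact power $z^{-n}$ (which forces $m=n-1$ on the nose, not merely $m\equiv n-1\pmod k$); once these are pinned down, the geometric-sum cancellation does all the work.
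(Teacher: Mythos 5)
Your proof is correct and takes essentially the same route as the paper: both arguments rest on the substitution relation $Y_g(v^{j+1},z)=\lim_{z^{1/k}\to\eta^{-j}z^{1/k}}Y_g(v^1,z)$, where your root-of-unity filter over $j$ merely packages the paper's coefficient comparison at $z^{-\wt v}$ (which gives $o(v^j)=o(v^1)$ for each $j$) into a single geometric sum. The second assertion is handled identically in both, by extracting the coefficient of $z^{-n}$ from $Y_g(v^1,z)=k^{-n}z^{(1/k-1)n}Y(v,z^{1/k})$ to get $o(v^1)=k^{-n}o(v)$.
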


\begin{proof}
It is good enough to show the result for homogeneous $v.$ Note that
$$
Y_g(g v^{1},z)=\lim_{z^{1/k}\to e^{\frac{2\pi i}{k}}z^{1/k} }Y_g(v^1,z)
= \sum_{m\in\Z}v^1_{\frac{m}{k}}(e^{\frac{2\pi i}{k}}z^{\frac{1}{k}})^{-m-k}
= \sum_{m\in\Z}v^1_{\frac{m}{k}}e^{\frac{-2m\pi i}{k}}z^{\frac{-m-k}{k}}.
$$
Comparing the coefficients of $z^{-\wt v}$,
we have $o(v^2)=o(gv^1)=o(v^1)$ and $o(\bar{v})=ko(v^1)$.
If $v\in V_n$ is a highest weight vector, we have
$Y_g( v^{1},z)=k^{-n}z^{(1/k-1)n}Y(v, z^{1/k})$ and
$o(v^1)=k^{-n}o(v)$. The result follows.
\end{proof}

Here is a general result  which will be used later.
\begin{lem}\label{general}
Let $\sigma$ be an automorphism  of $V$ of order $T$ and  $M=\sum_{n\geq 0}M_{\frac{n}{T}+\lambda}$  a $\sigma$-twisted $V$-module where $\lambda$ is the weight of $M.$
Then $M\circ \sigma^r$ and $M$ are isomorphic for any integer $r$ and $Z_M(v, (\sigma, \sigma^r), \tau)=e^{2\pi ir(\frac{c}{24}-\lambda)}Z_M(v, \tau+r).$
\end{lem}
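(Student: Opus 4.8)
The plan is to prove the two assertions in the order stated, since the isomorphism $M\circ\sigma^r\cong M$ is exactly what places $M$ in $\mathscr{M}(\sigma,\sigma^r)$ and so makes the quantity $Z_M(v,(\sigma,\sigma^r),\tau)$ meaningful. For the isomorphism I would bootstrap from the fact recalled in Section 3 that an admissible $\sigma$-twisted module satisfies $M\circ\sigma\cong M$. Because the $\Aut(V)$-action on twisted modules is a right action, one has $M\circ\sigma^{r}=(M\circ\sigma^{r-1})\circ\sigma$, and an induction on $r\ge 0$ using $M\circ\sigma\cong M$ at each step yields $M\circ\sigma^r\cong M$; for negative $r$ one invokes $\sigma^{-1}=\sigma^{T-1}$ together with the finite order of $\sigma$. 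In particular $\sigma^r\in G_M$, so $(\sigma,\sigma^r)\in P(G)$ and $M\in\mathscr{M}(\sigma,\sigma^r)$.

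For the trace identity the idea is to evaluate both sides as graded traces over the weight spaces $M_{\lambda+n/T}$, $n\ge 0$. The key input is the fact stated in Section 4 that, with the canonical normalization of the projective representation, $\varphi(\sigma)$ acts on the admissible-degree-$m$ subspace as the scalar $e^{2\pi i m}$; under the reindexing used here this says $\varphi(\sigma)$ acts on $M_{\lambda+n/T}$ as $e^{2\pi i n/T}$. Since the restriction of $\varphi$ to the cyclic subgroup $\langle\sigma\rangle$ may be taken to be an honest representation (every projective representation of a cyclic group lifts, consistently with $\varphi(1)=1$), I may set $\varphi(\sigma^r)=\varphi(\sigma)^r$, so that $\varphi(\sigma^r)$ acts on $M_{\lambda+n/T}$ as $e^{2\pi i rn/T}$.

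With these scalars fixed, I would expand
$$Z_M(v,(\sigma,\sigma^r),\tau)=\sum_{n\ge 0}\big(\tr_{M_{\lambda+n/T}}o(v)\big)\,e^{2\pi i rn/T}\,q^{\lambda+n/T-c/24},$$
using that $o(v)$ preserves each weight space and that $L(0)$ acts by $\lambda+n/T$ there. Writing $q^{\lambda+n/T-c/24}=q^{\lambda-c/24}\,e^{2\pi i\tau n/T}$ and combining the two exponentials produces the factor $e^{2\pi i(n/T)(\tau+r)}$, which is precisely what the substitution $\tau\mapsto\tau+r$ contributes in $Z_M(v,\tau+r)$. The only remaining discrepancy between the two expansions is the $n$-independent prefactor, namely $q^{\lambda-c/24}$ against $e^{2\pi i(\tau+r)(\lambda-c/24)}$, whose ratio is $e^{-2\pi i r(\lambda-c/24)}=e^{2\pi i r(c/24-\lambda)}$. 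Pulling this constant out of the sum gives the claimed formula.

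I expect the substantive computation to be routine; the only points needing care are the two bookkeeping matters already flagged: matching the admissible grading index $m\in\frac{1}{T}\Z_+$ with the weight index $\lambda+n/T$, and fixing the normalization of $\varphi(\sigma^r)$ so that the phase $e^{2\pi i rn/T}$ is correct. Because $Z_M$ is only defined up to an overall nonzero scalar, I would state explicitly that the identity is asserted for the normalization in which $\varphi(\sigma)$ acts as $e^{2\pi i m}$ on the degree-$m$ subspace, which is the normalization used throughout the paper.
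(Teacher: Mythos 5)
Your proposal is correct and takes essentially the same route as the paper: both deduce $\sigma^r$-stability from the fact that every $\sigma$-twisted module is $\sigma$-stable, and both compute the trace weight-space by weight-space using the eigenvalue $e^{2\pi i rn/T}$ of $\varphi(\sigma^r)$ on $M_{\lambda+n/T}$ and then factor out the constant $e^{2\pi i r(\frac{c}{24}-\lambda)}$. Your explicit remarks on linearizing $\varphi$ on the cyclic subgroup (so that $\varphi(\sigma^r)=\varphi(\sigma)^r$) and on the scalar ambiguity of $Z_M$ simply make precise the normalization the paper uses implicitly via the Section 4 statement that $\varphi(g)$ acts on $M(n)$ as $e^{2\pi i n}$.
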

\begin{proof}
We have mentioned already that  for any $\sigma$-twisted module is $\sigma$-stable. Thus any $\sigma$-twisted module is also $\sigma^r$-stable.
By the definition of trace function, we have
\begin{equation*}
\begin{split}
Z_M(v, (\sigma, \sigma^r), \tau)&=\tr_M o(v)\varphi(\sigma^r)q^{L(0)-\frac{c}{24}}\\
& =\sum_{n\geq 0}\tr_{M_{\frac{n}{T}+\lambda }}o(v)e^{\frac{2\pi irn }{T}}q^{\frac{n}{T}+\lambda-\frac{c}{24}}\\
& =e^{2\pi ir(\frac{c }{24}-\lambda)}\sum_{n\geq 0}\tr_{M_{\frac{n}{T}+\lambda }}o(v)(e^{2\pi ir}q)^{L(0)-\frac{c}{24}}\\
& =e^{2\pi ir(\frac{c }{24}-\lambda)}Z_M(v,  \tau+r),
\end{split}
\end{equation*}
as desired.
\end{proof}

We first assume that  $g=(1,2,\ldots,k)$ with  $k$ being a prime. This assumption will make the computations much easier as $\sigma^r$ is also a $k$-cycle for $r=1,...,k-1.$
Recall that $M^0, M^1,\ldots, M^p$ are the irreducible $V$-modules up to isomorphism.
Let $M^j=\oplus _{n\geq 0} M^j_{\lambda_j+n}$, where $j=0,\ldots,p$.
Then the irreducible $g^r$-twisted $V^{\otimes k}$-module are $T_{g^r}^k(M^j)$
 and
 $$T_{g^r}^k(M^j)= \oplus_{n\geq 0} T_{g^r}^k(M^j)_{\frac{\lambda_j}{k}+\frac{(k^2-1)c}{24k}+\frac{n}{k}}$$
 where  $T_{g^r}^k(M^j)_{\frac{\lambda_j}{k}+\frac{(k^2-1)c}{24k}+\frac{n}{k}}=M^j_{\lambda_j+n}$ as vector spaces.

\begin{lem}\label{rs}
Suppose that  $r,s\in \N$ with $r$ positive such that $s\equiv ra\ (\mod k)$ for some integer $a,$ and $v\in V$ is a highest weight vector for the Virasoro algebra,
then
$$Z_{T_{g^r}^k(M^j)}(\bar{v}, (g^r, g^s), \tau)=\e^{2\pi i a (-\frac{\lambda_j}{k}+\frac{c}{24k})}k^{-\wt v+1}Z_{M^j}(v, \frac{\tau+a}{k}).$$
\end{lem}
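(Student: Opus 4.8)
The plan is to reduce everything to two facts already in hand: Lemma~\ref{general}, which trades the insertion of $\varphi(g^s)$ for a shift of the modular variable, and Lemma~\ref{two}, which relates $o(\bar v)$ on the twisted module to $o(v)$ on $M^j$. First I would record what primality of $k$ forces. Since $k$ is prime and $g^r\neq 1$, the element $g^r$ is again a $k$-cycle, hence of order $k$, so $T_{g^r}^k(M^j)$ is an irreducible $g^r$-twisted $V^{\otimes k}$-module of conformal weight
\[
\lambda=\frac{\lambda_j}{k}+\frac{(k^2-1)c}{24k},
\]
graded in $\lambda+\tfrac1k\Z_+$. Moreover, because $g$ has order $k$ and $s\equiv ra\pmod k$, we have $g^s=(g^r)^a$, so the commuting pair is exactly $(g^r,(g^r)^a)$, which is the shape required to invoke Lemma~\ref{general}.

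The first main step is to apply Lemma~\ref{general} to the vertex operator algebra $U=V^{\otimes k}$, whose central charge is $kc$, with $\sigma=g^r$ of order $k$ and exponent $a$. This gives
\[
Z_{T_{g^r}^k(M^j)}(\bar v,(g^r,g^s),\tau)=e^{2\pi i a\left(\frac{kc}{24}-\lambda\right)}\,Z_{T_{g^r}^k(M^j)}(\bar v,\tau+a).
\]
I would then collapse the prefactor using $\frac{kc}{24}-\frac{(k^2-1)c}{24k}=\frac{c}{24k}$, so that $\frac{kc}{24}-\lambda=-\frac{\lambda_j}{k}+\frac{c}{24k}$, which is precisely the exponential factor in the claim.

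The second main step is a direct evaluation of the plain trace $Z_{T_{g^r}^k(M^j)}(\bar v,\tau+a)$. On each graded piece $T_{g^r}^k(M^j)(\tfrac nk)=M^j_{\lambda_j+n}$ I would use Lemma~\ref{two} to replace $o(\bar v)$ by $k^{-\wt v+1}o(v)$ (legitimate since $v$ is a Virasoro highest weight vector), and the relation $L_g(0)=\frac1kL(0)+\frac{(k^2-1)c}{24k}$ to get $L_g(0)-\frac{kc}{24}=\frac1k\bigl(L(0)-\frac c{24}\bigr)$ on that piece. With the convention $q^x=e^{2\pi i\tau x}$, the factor $\frac1k$ in the exponent turns $\tau+a$ into $\frac{\tau+a}{k}$, and summing over $n$ yields
\[
Z_{T_{g^r}^k(M^j)}(\bar v,\tau+a)=k^{-\wt v+1}\,Z_{M^j}\!\left(v,\frac{\tau+a}{k}\right).
\]
Substituting this into the output of the first step gives the asserted identity.

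Each step is short, so the hard part will be bookkeeping rather than any real obstacle. The delicate points are: confirming that $g^r$ genuinely has order $k$ and that $g^s=(g^r)^a$, both of which are exactly where primality of $k$ is used and which allow Lemma~\ref{general} to apply with $T=k$; the central-charge arithmetic collapsing $\frac{kc}{24}-\frac{(k^2-1)c}{24k}$ to $\frac{c}{24k}$; and keeping the branch of the fractional $q$-powers consistent when passing from $\tau+a$ to $\frac{\tau+a}{k}$, which is why I would write the exponential throughout as $e^{2\pi i\tau(\,\cdot\,)}$ rather than as a power of $q$. Aligning every scalar prefactor is the place where a slip is most likely.
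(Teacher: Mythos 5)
Your proposal is correct and follows essentially the same route as the paper's proof: apply Lemma~\ref{general} to $U=V^{\otimes k}$ (central charge $kc$, conformal weight $\frac{\lambda_j}{k}+\frac{(k^2-1)c}{24k}$, with $g^s=(g^r)^a$) to trade the pair $(g^r,g^s)$ for the shift $\tau\mapsto\tau+a$, then evaluate the plain trace via Lemma~\ref{two} together with $L_g(0)-\frac{kc}{24}=\frac{1}{k}\bigl(L(0)-\frac{c}{24}\bigr)$. The scalar bookkeeping you single out, in particular the prefactor collapsing to $e^{2\pi i a(-\frac{\lambda_j}{k}+\frac{c}{24k})}$, is exactly the computation carried out in the paper.
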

\begin{proof}
A straightforward calculation using Lemmas \ref{two} and \ref{general} gives
\begin{equation*}
\begin{split}
Z_{T_{g^r}^k(M^j)}(\bar{v}, (g^r, g^s), \tau)
&=e^{2\pi i a (-(\frac{\lambda_j}{k}+\frac{(k^2-1)c}{24k})+\frac{kc}{24}})Z_{T_{g^r}^k(M^j)}(\bar{v},  \tau+a)\\
&=\e^{2\pi i a (-\frac{\lambda_j}{k}+\frac{c}{24k})}\sum_{n\geq 0 } tr_{T_{g^r}^k(M^j)_{\frac{\lambda_j}{k}+\frac{(k^2-1)c}{24k}+\frac{n}{k}}}o(\bar{v})(e^{2\pi i(\tau+a)})^{ (L_g(0)-\frac{kc}{24})}\\
& =\e^{2\pi i a (-\frac{\lambda_j}{k}+\frac{c}{24k})}\sum_{n\geq 0}\tr_{M^j_{n+\lambda_j}}k^{-\wt v+1}o(v)e^{2\pi i(\tau+a)(\frac{L(0)}{k}+\frac{(k^2-1)c}{24k}-\frac{c}{24})}\\
& =\e^{2\pi i a (-\frac{\lambda_j}{k}+\frac{c}{24k})}k^{-\wt v+1}Z_{M^j}(v, \frac{\tau+a}{k}),
\end{split}
\end{equation*}
as expected.
\end{proof}

\begin{lem}\label{l5.4}
Let $0<r,s,a,b<k$ be positive integers such that $s\equiv ra$,$-r\equiv sb$ modulo $k.$
Then
$$S_{T_{g^r}^k(M^j),T^k_{g^s}(M^i)}=e^{2\pi i a (-\frac{\lambda_j}{k}+\frac{c}{24k})-2\pi i b (-\frac{\lambda_i}{k}+\frac{c}{24k})}\sum_{l=0}^{p}S_{M^j,M^i} A_{l,i}^{r,s}, $$
where $A_{l,i}^{r,s}$ is the entry of $\rho(A^{r,s})$  defined in Theorem \ref{minvariance} with $A^{r,s}=\begin{pmatrix} k & -b\\ -a & \frac{1+ab}{k} \end{pmatrix}.$
\end{lem}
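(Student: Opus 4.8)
The plan is to reduce the $S$-transformation of the $g^r$-twisted trace function to Zhu's modular invariance for $V$ itself, using the $\Delta_k$-operator dictionary recorded in Lemma \ref{rs}. Because $k$ is prime, every power $g^r$ with $0<r<k$ is again a $k$-cycle, so Lemma \ref{rs} applies verbatim to each of the modules $T_{g^r}^k(M^j)$ and $T_{g^s}^k(M^i)$. It suffices to verify the claimed identity after pairing both sides against the vectors $\bar v$ for $v\in V$ a highest weight vector of the Virasoro algebra, since the functions $Z_{T_{g^s}^k(M^i)}(\bar v,(g^s,g^{-r}),\tau)$ are linearly independent on $V^{\otimes k}\times\H$ by \cite{DLM5} and the $S$-matrix entries are scalars independent of the test vector.

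First I would apply Lemma \ref{rs} at $-1/\tau$ (the congruence $s\equiv ra$ being in force), obtaining
\begin{equation*}
Z_{T_{g^r}^k(M^j)}(\bar v,(g^r,g^s),-1/\tau)=e^{2\pi i a(-\frac{\lambda_j}{k}+\frac{c}{24k})}k^{-\wt v+1}Z_{M^j}\!\left(v,\frac{a\tau-1}{k\tau}\right).
\end{equation*}
The essential move is a change of variables: setting $w=\frac{\tau+b}{k}$ one checks directly that
\begin{equation*}
\frac{a\tau-1}{k\tau}=C\cdot w,\qquad C=\begin{pmatrix}a&-\frac{1+ab}{k}\\ k&-b\end{pmatrix}.
\end{equation*}
Here the two separate $\tfrac1k$-rescalings of $\tau$ --- the one produced by $L_g(0)=\tfrac1k L(0)+\cdots$ on the source and the one that will reappear on the target --- conspire so that the intermediate map, which by itself is \emph{not} in $SL_2(\Z)$, is promoted to a genuine modular matrix $C$. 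The hypotheses $s\equiv ra$ and $-r\equiv sb$ force $ab\equiv -1\pmod k$, whence $\frac{1+ab}{k}\in\Z$ and $\det C=-ab+(1+ab)=1$, so indeed $C\in SL_2(\Z)$.

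Next I would invoke Theorem \ref{minvariance}(1) for $V$ (the case $G=\{1\}$, i.e. Zhu's theorem), which expands $Z_{M^j}(v,C\cdot w)$ with automorphy factor $(kw-b)^{\wt[v]}=\tau^{\wt[v]}$ and coefficients $\rho(C)_{M^j,M^i}$. The factorization $C=S\cdot A^{r,s}$, verified by a direct $2\times2$ product with $A^{r,s}=\left(\begin{smallmatrix}k&-b\\-a&\frac{1+ab}{k}\end{smallmatrix}\right)$, together with the fact that $\rho$ is a homomorphism, splits this as $\rho(C)_{M^j,M^i}=\sum_{l}S_{M^j,M^l}A^{r,s}_{l,i}$ --- the $S$-matrix of $V$ composed with $\rho(A^{r,s})$. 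Finally I would run Lemma \ref{rs} in reverse (now with first entry $g^s$, second entry $g^{-r}$, and the congruence $-r\equiv sb$) to rewrite each $Z_{M^i}(v,\frac{\tau+b}{k})$ as $e^{-2\pi i b(-\frac{\lambda_i}{k}+\frac{c}{24k})}k^{\wt v-1}Z_{T_{g^s}^k(M^i)}(\bar v,(g^s,g^{-r}),\tau)$. The powers $k^{-\wt v+1}$ and $k^{\wt v-1}$ cancel, the phases combine, and comparing the coefficient of $Z_{T_{g^s}^k(M^i)}(\bar v,(g^s,g^{-r}),\tau)$ yields the asserted formula.

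The main obstacle is precisely the bookkeeping in the middle step: one must see past the fact that $\tau\mapsto\frac{a\tau-1}{k\tau}$ is not an element of $SL_2(\Z)$ and identify the correct representative $C$ only after the target's $\tfrac1k$-rescaling is absorbed into $w$, and then recognize the clean splitting $C=S\,A^{r,s}$ so that the answer is phrased through the $V$-matrix $S$ and $\rho(A^{r,s})$ rather than through $\rho(C)$ directly. The remaining work --- checking integrality of $\frac{1+ab}{k}$, the two determinant conditions, and the cancellation of $k$-powers and collection of exponential phases --- is routine.
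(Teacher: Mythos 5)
Your proposal is correct and takes essentially the same route as the paper: both sides are converted to untwisted trace functions via Lemma \ref{rs}, and the transformation $\tau\mapsto\frac{a\tau-1}{k\tau}$ is handled through the conformal block of $V$ --- the paper simply performs your factorization $C=S\,A^{r,s}$ in two successive expansions (first the $S$-transformation at $\bar\tau=\frac{k\tau}{1-a\tau}$, then the $A^{r,s}$-step sending $\bar\tau$ to $\frac{\tau+b}{k}$, with the automorphy factors $\bar\tau^{\wt v}(\frac{1-a\tau}{k})^{\wt v}=\tau^{\wt v}$ cancelling exactly as in your composed version) rather than invoking the homomorphism property of $\rho$ once. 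Incidentally, your coefficient $\sum_{l}S_{M^j,M^l}A^{r,s}_{l,i}$ is what the paper's own computation in fact produces; the $S_{M^j,M^i}$ appearing inside the sum in the lemma's displayed formula is a typo.
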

\begin{proof}
We first  prove that $A^{r,s}\in \SL_2(\Z).$ Clearly, the determinant of $A$ is $1.$ So we only need to show that $ab+1$ is divizable by $k.$
Since $k$ is a prime, $a,b$ are invertible modulo $k$ and $ab\cong -1$ modulo $k$ which is equivalent to the fact that $\frac{1+ab}{k}$ is an integer.

From  Lemma \ref{rs} we see that
\begin{equation*}
\begin{split}
Z_{T_{g^r}^k(M^j)}(\bar{v}, (g^r, g^s), -1/\tau)
& =\e^{2\pi i a (-\frac{\lambda_j}{k}+\frac{c}{24k})}k^{-\wt v+1}Z_{M^j}(v, \frac{\frac{-1}{\tau}+a}{k})\\
& =\e^{2\pi i a (-\frac{\lambda_j}{k}+\frac{c}{24k})}k^{-\wt v+1}Z_{M^j}(v, -\frac{1}{\frac{\tau k}{1-a\tau}})\\
& =\e^{2\pi i a (-\frac{\lambda_j}{k}+\frac{c}{24k})}k^{-\wt v+1}(\bar{\tau}^{\wt v})\sum_{i=0}^{p}S_{M^j,M^i}Z_{M^i}(v, \bar{\tau})
\end{split}
\end{equation*}
where $ \bar{\tau}=\frac{\tau k}{1-a\tau}$.
Note that $ \bar{\tau}=A^{r,s} \frac{\tau +b}{k}$
and
$$Z_{M^i}(v, \bar{\tau})=(\frac{1-a\tau}{k})^{\wt v}\sum_{l=0}^{p} A_{i,l}^{r.s}Z_{M^l}(v,\frac{\tau +b}{k}).$$
Thus,
\begin{equation*}
\begin{split}
& Z_{T_{g^r}^k(M^j)}(\bar{v}, (g^r, g^s), -1/\tau)\\
& =\e^{2\pi i a (-\frac{\lambda_j}{k}+\frac{c}{24k})}k^{-\wt v+1}(\frac{\tau k}{1-a\tau})^{\wt v}\sum_{i=0}^{p}S_{M^j,M^i}(\frac{1-a\tau}{k})^{\wt v}\sum_{l=0}^{p} A_{i,l}^{r,s}Z_{M^l}(v,\frac{\tau +b}{k})\\
& =\e^{2\pi i a (-\frac{\lambda_j}{k}+\frac{c}{24k})}k^{-\wt v+1}\tau^{\wt v}\sum_{i,l=0}^{p}S_{M^j,M^i} A_{i,l}^{r,s}Z_{M^l}(v,\frac{\tau +b}{k})\\
& =\e^{2\pi i a (-\frac{\lambda_j}{k}+\frac{c}{24k})}k^{-\wt v+1}\tau^{\wt v}\sum_{i,l=0}^{p}S_{M^j,M^l} A_{l,i}^{r,s}Z_{M^i}(v,\frac{\tau +b}{k})
\end{split}
\end{equation*}
On the other hand,
\begin{equation*}
\begin{split}
& Z_{T_{g^r}^k(M^j)}(\bar{v}, (g^r, g^s), -1/\tau)\\
& =\tau^{\wt v}\sum_{i=0}^{p}S_{T_{g^r}^k(M^j),T^k_{g^s}(M^i)}Z_{T^k_{g^s}(M^i)}(\bar{v}, (g^s,g^{-r}),\tau)\\
& =\tau^{\wt v}\sum_{i=0}^{p}S_{T_{g^r}^k(M^j),T^k_{g^s}(M^i)}\e^{2\pi i b (-\frac{\lambda_i}{k}+\frac{c}{24k})}k^{-\wt v+1}Z_{M^i}(v, \frac{\tau+b}{k}).
\end{split}
\end{equation*}
Comparing the right sides of these equations,  we have
\[
S_{T_{g^r}^k(M^j),T^k_{g^s}(M^i)}=e^{2\pi i a (-\frac{\lambda_j}{k}+\frac{c}{24k})-2\pi i b (-\frac{\lambda_i}{k}+\frac{c}{24k})}\sum_{l=0}^{p}S_{M^j,M^i} A_{l,i}^{r,s}.
\]
We should point out that the matrix $A^{r,s}$ depends on $r,s$ only.
\end{proof}

The following result is easy:
\begin{lem}\label{character}
Let $M$ be an irreducible $V$-module. For $g\in G$, $v\in V$ and $0<r<k$ then

(1) $Z_{M^{\otimes k}}(\1, (1,g^r),\tau)= \chi _{M}(k \tau)$

(2) $Z_{M^{\otimes k}}(\bar{v}, (1,g^r),\tau)=kZ_{M}(v,k \tau).$
\end{lem}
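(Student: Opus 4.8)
The plan is to compute $Z_{M^{\otimes k}}(v,(1,g^r),\tau)=\tr_{M^{\otimes k}}o(v)\varphi(g^r)q^{L(0)-kc/24}$ directly from the permutation action, using a trace formula for cyclic permutations of tensor factors. The central charge of $V^{\otimes k}$ is $kc$, and $L(0)=\sum_{i=1}^{k}L(0)^{(i)}$ is the sum of the Virasoro zero-modes on the $k$ factors, so $q^{L(0)-kc/24}=A^{\otimes k}$ with $A=q^{L(0)-c/24}$ acting on a single copy of $M$. On the untwisted module $M^{\otimes k}$ I take $\varphi(g^r)$ to be the honest (non-projective) permutation operator implementing the shift; since $gv^j=v^{j+1}$, one checks $\varphi(g)$ is the right cyclic shift $w_1\otimes\cdots\otimes w_k\mapsto w_k\otimes w_1\otimes\cdots\otimes w_{k-1}$, and because $k$ is prime and $0<r<k$, $\varphi(g^r)$ again implements a single $k$-cycle.

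First I would record the key trace identity: for any operators $B_1,\dots,B_k$ on $M$ and any single-$k$-cycle permutation operator $\pi$ on $M^{\otimes k}$,
$$\tr_{M^{\otimes k}}\pi\,(B_1\otimes\cdots\otimes B_k)=\tr_M\bigl(B_{i_1}B_{i_2}\cdots B_{i_k}\bigr),$$
where $i_1,\dots,i_k$ is the cyclic order determined by $\pi$. This is proved by expanding both sides in a basis $\{e_i\}$ of $M$: because $\pi$ is one $k$-cycle, the matrix entries on the diagonal form a single closed index-loop, and summing over it yields exactly the trace of the corresponding matrix product.

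For part (1) I set $v=\1$, so $o(\1)=\mathrm{id}$ and every $B_i=A$; the identity gives $\tr_M A^{k}=\tr_M q^{k(L(0)-c/24)}=\chi_M(k\tau)$, which is the claim. For part (2) I write $o(\bar v)=\sum_{j=1}^{k}o(v^j)$, where $o(v^j)$ acts as $o(v)$ on the $j$-th factor and as the identity elsewhere. Since $o(v)$ commutes with $L(0)$ it commutes with $A$, so in the $j$-th term the factor-operators are $B_j=o(v)A$ and $B_i=A$ for $i\neq j$, and these all mutually commute. Hence the cyclic product in the trace identity collapses to $o(v)A^{k}=o(v)q^{k(L(0)-c/24)}$ independently of the cyclic order, and each term equals $\tr_M o(v)A^{k}=Z_M(v,k\tau)$. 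Summing over the $k$ values of $j$ gives $kZ_M(v,k\tau)$, as asserted; one also sees the two parts are consistent, since $\bar{\1}=k\,\1_{V^{\otimes k}}$.

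The points requiring care are bookkeeping rather than conceptual. One must fix the normalization of $\varphi(g^r)$ as the genuine permutation operator, which is legitimate because the cyclic group $\langle g\rangle$ acts by honest permutations on $M^{\otimes k}$ and so lifts without a projective cocycle. One must also invoke that $k$ is prime, which guarantees that $g^r$ is a full $k$-cycle for every $0<r<k$, so the single-loop trace identity applies uniformly; and the commutativity of $o(v)$ with $A$ is precisely what makes the final answer independent of both $r$ and $j$.
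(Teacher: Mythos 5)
Your proof is correct, and it takes a genuinely different route from the paper's. The paper works with a basis of $M$ consisting of homogeneous generalized eigenvectors of $o(v)$, splits the induced basis of $M^{\otimes k}$ into $G$-orbits, and kills every non-diagonal orbit by a roots-of-unity cancellation: on the $k$-dimensional span of such an orbit, $o(\bar v)g^r$ has eigenvalues $(\sum_i\lambda_i)e^{2\pi is/k}$, $s=0,\dots,k-1$, which sum to zero, so only the diagonal vectors $w^{\alpha}\otimes\cdots\otimes w^{\alpha}$ contribute, each giving $k\lambda_\alpha q^{k(\wt w^\alpha-c/24)}$; part (1) is then deduced from part (2) via $\bar\1=k(\1\otimes\cdots\otimes\1)$. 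You instead isolate the transfer-matrix identity $\tr_{M^{\otimes k}}\pi\,(B_1\otimes\cdots\otimes B_k)=\tr_M(B_{i_1}\cdots B_{i_k})$ for a single $k$-cycle $\pi$ and specialize $B_i$, getting (1) and (2) directly and uniformly. The two arguments use primality of $k$ at the same point (to ensure $g^r$ is a full $k$-cycle for $0<r<k$; in the paper this is what makes the non-diagonal orbits free of size $k$ with eigenvalues running over all $k$-th roots of unity), and your normalization of $\varphi(g^r)$ as the honest permutation operator agrees with the paper's implicit choice, since they simply write $g^r$ inside the trace. What each buys: the paper's version handles $o(\bar v)$ in a single trace without splitting into $k$ terms and needs no auxiliary lemma; your identity is more flexible, since for $\gcd(r,k)=d>1$ it immediately yields a product of $d$ loop traces, which is exactly the bookkeeping needed for the non-prime case the paper defers at the end of Section 6. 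One small point of rigor common to both: the trace identity should be read gradewise (each weight space of $M^{\otimes k}$ is finite-dimensional and preserved by $\pi$ and by $A^{\otimes k}$), and your observation that $o(v)$ commutes with $A=q^{L(0)-c/24}$ is what makes the cyclic product collapse to $o(v)A^k$ independently of $r$ and $j$; both points are in order as you stated them.
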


\begin{proof}
(1) follows from (2) by noting that $\bar \1=k(\1\otimes \cdots \otimes \1).$ So we only need to prove (2). Let $\{w^\alpha|\alpha\in A\}$ be a basis of $M$ such that each
$w^{\alpha}$ is homogeneous and a generalized eigenvector of $o(v).$ Then $\{w^{\alpha_1}\otimes \cdots \otimes w^{\alpha_k}|\alpha_i\in A\}$ is a basis of $M^{\otimes k}$
and the action of $G$ on $M^{\otimes k}$ preserves this basis.  Note that each $w^{\alpha_1}\otimes \cdots \otimes w^{\alpha_k}$ is a generalized eigenvector of $o(\bar v)$ with eigenvalue $\sum_{i=1}^k\lambda_i$ where $\lambda_i$ is the eigenvalue of $w^{\alpha_i}.$ Moreover, for any $h\in G,$ $h(
w^{\alpha_1}\otimes \cdots \otimes w^{\alpha_k})$ is also a generalized eigenvector of $o(\bar v)$ with the same eigenvalue $\sum_{i=1}^k\lambda_i.$
 If there are at least two different $\alpha_i$ in $w^{\alpha_1}\otimes \cdots \otimes w^{\alpha_k},$ then $\sum_{s=0}^{k-1}\C g^sw^{\alpha_1}\otimes \cdots \otimes w^{\alpha_k}$
 is a $k$-dimensional subspace of $M^{\otimes k}$ and has a basis consisting of generalized eigenvectors of $o(\bar v)g^r$ with eigenvalues $(\sum_{i=1}^k\lambda_i)e^{\frac{2\pi i s}{k}}$
  for $s=0,...,k-1.$ Thus the trace contribution from this subspace is $0.$ So we only need to compute the trace contribution from vectors $w^{\alpha}\otimes \cdots w^{\alpha}$ for
  $\alpha\in A.$

 Let $\lambda_{\alpha}$ be the eigenvalue of $w^{\alpha}.$ Note  that $\omega_{V^{\otimes k}}=\bar \omega.$ Denote the corresponding $L(0)$ by $L_{V^{\otimes k}}(0).$
  Then $w^{\alpha}\otimes \cdots w^{\alpha}$ is a generalized eigenvector of $o(\bar v)g^rq^{L_{V^{\otimes k}}(0)-\frac{kc}{24}}$ with eigenvalue
   $k\lambda_{\alpha} q^{k(\wt w^{\alpha}-\frac{c}{24})}.$  Thus we have
\begin{equation*}
\begin{split}
Z_{(M)^{\otimes k}}(\bar{v}, (1,g^r),\tau)&=\tr_{M^{\otimes k}}o(\bar v)g^rq^{L_{V^{\otimes k}}(0)-kc/24}\\
&=\sum_{\alpha\in A}k\lambda_{\alpha} q^{k(\wt w^{\alpha}-\frac{c}{24})}\\
& =kZ_{M}(v,k \tau).
\end{split}
\end{equation*}
The proof is finished.
\end{proof}

\begin{lem}\label{case2}
Let $g\in G$, $M^i,$  $r$ be as before, $i=0,\ldots, p$. Then
\[
S_{(M^i)^{\otimes k}, T_{g^r}^k(M^j)}=S_{M^i,M^j}=S_{T_{g^r}^k(M^i),(M^j)^{\otimes k}}.
\]
\end{lem}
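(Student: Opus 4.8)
The plan is to read off the two entries directly from the modular $S$-transformation, specialized as in equations (\ref{S-tran1}) and (\ref{S-tran2}), and to match them against the $S$-matrix of $V$ itself by means of the explicit trace-function identities of Lemma \ref{character} and Lemma \ref{rs}. First I would pin down the relevant module sets. Since $k$ is prime and $0<r<k$, the permutation $g^r$ is again a $k$-cycle, so $\mathscr{M}(g^r)=\{T_{g^r}^k(M^j)\mid 0\le j\le p\}$, while the untwisted modules that are $g^{-r}$-stable are exactly the diagonal ones, $\mathscr{M}(1,g^{-r})=\{(M^j)^{\ot k}\mid 0\le j\le p\}$ (a non-diagonal tensor module is cyclically moved by the $k$-cycle $g^{-r}$). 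Throughout I would take the test vector to be $\bar v=\sum_{j=1}^{k}v^j$ with $v\in V$ a Virasoro highest weight vector; then each $v^j$, and hence $\bar v$, is primary in $V^{\ot k}$ of the same weight, so $\wt[\bar v]=\wt v$, which is what lets me replace square-bracket weights by ordinary weights.

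For the first equality I would start from $(M^i)^{\ot k}\in\mathscr{M}(1)$ and apply (\ref{S-tran2}) with $g$ replaced by $g^r$, obtaining $Z_{(M^i)^{\ot k}}(\bar v,(1,g^r),-1/\tau)=\tau^{\wt v}\sum_j S_{(M^i)^{\ot k},T_{g^r}^k(M^j)}Z_{T_{g^r}^k(M^j)}(\bar v,(g^r,1),\tau)$. On the left I would use $Z_{(M^i)^{\ot k}}(\bar v,(1,g^r),\tau)=kZ_{M^i}(v,k\tau)$ from Lemma \ref{character}(2) and then apply the defining $S$-transformation of $V$ to $Z_{M^i}(v,-1/(\tau/k))$. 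On the right I would invoke Lemma \ref{rs} with $s=0$ and $a=0$ (legitimate because $r$ is invertible mod $k$), giving $Z_{T_{g^r}^k(M^j)}(\bar v,(g^r,1),\tau)=k^{1-\wt v}Z_{M^j}(v,\tau/k)$. After cancelling the common factor $k\,\tau^{\wt v}k^{-\wt v}$, both sides become linear combinations of the $V$-trace functions $Z_{M^l}(v,\tau/k)$, and comparing coefficients yields $S_{(M^i)^{\ot k},T_{g^r}^k(M^j)}=S_{M^i,M^j}$.

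The second equality is entirely parallel but uses (\ref{S-tran1}): starting from $T_{g^r}^k(M^i)\in\mathscr{M}(g^r)$ I obtain $Z_{T_{g^r}^k(M^i)}(\bar v,(g^r,1),-1/\tau)=\tau^{\wt v}\sum_j S_{T_{g^r}^k(M^i),(M^j)^{\ot k}}Z_{(M^j)^{\ot k}}(\bar v,(1,g^{-r}),\tau)$. Now I would evaluate the left side by Lemma \ref{rs} (again at $a=0$) followed by the $V$-level $S$-transformation applied at the modulus $k\tau$, and the right side by Lemma \ref{character}(2) applied to $g^{-r}=g^{k-r}$ with $0<k-r<k$, giving $Z_{(M^j)^{\ot k}}(\bar v,(1,g^{-r}),\tau)=kZ_{M^j}(v,k\tau)$. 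Once more the powers of $k$ and $\tau$ collapse to a common factor, and coefficient comparison yields $S_{T_{g^r}^k(M^i),(M^j)^{\ot k}}=S_{M^i,M^j}$.

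The one point needing genuine care, and the main obstacle, is this final coefficient comparison. The computation only produces the identity $\sum_l c_l Z_{M^l}(v,\tau)=0$ (with $c_l$ the difference of the two $S$-entries) for $v$ a Virasoro highest weight vector, because Lemma \ref{rs} is available only for such $v$; so I must invoke the linear independence of the trace functions $Z_{M^l}(v,\tau)$ of the irreducible $V$-modules \cite{Z} in a form still effective after this restriction. In particular, specializing to $v=\1$ reduces the statement to the linear independence of the irreducible characters $\chi_{M^l}(\tau)$, so this is where the linear-independence input must be applied with attention. Apart from this, the only remaining work is the careful bookkeeping of the powers of $k$ coming from $o(\bar v)=k^{1-\wt v}o(v)$ (Lemma \ref{two}) against the factors $(\tau/k)^{\wt v}$ and $(k\tau)^{\wt v}$ produced by rescaling the modular variable, together with the identification $\wt[\bar v]=\wt v$; these cancel cleanly, but they must be tracked exactly for the two entries to come out equal and independent of the choice of $v$.
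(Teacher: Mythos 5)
Your proposal is correct and follows essentially the same route as the paper's proof: both specialize the modular transformations (\ref{S-tran1})--(\ref{S-tran2}) to the pairs $(1,g^r)$ and $(g^r,1)$, evaluate the resulting trace functions via Lemma \ref{character} and Lemma \ref{rs} with $s=a=0$, match the powers of $k$ and $\tau$, and compare coefficients against the expansion in the functions $Z_{T_{g^r}^k(M^j)}(\bar v,(g^r,1),\tau)=k^{1-\wt v}Z_{M^j}(v,\tau/k)$. Your closing caveat---that the coefficient comparison is only available for test vectors $\bar v$ with $v$ a Virasoro highest weight vector, so the linear-independence input from \cite{Z,DLM5} must be invoked in a form effective after this restriction---is a legitimate subtlety that the paper passes over silently, but it does not alter the method.
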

\begin{proof}
Using Lemma \ref{character} yields
\[
Z_{(M^i)^{\otimes K}}(\bar{v}, (1,g^r),-\frac{1}{\tau})=k Z_{M^i}(v,\frac{-1}{\frac{\tau}{k}})=k (\frac{\tau}{k})^{\wt v}\sum_{j=0}^{p}S_{M^i,M^j}Z_{M^j}(v,\frac{\tau}{k} ).
\]
By Lemma \ref{rs} with $s=a=0,$
 \[
Z_{T_{g^r}^k(M^j)}(\bar{v}, (g^r,1)),\tau)=k^{-\wt v+1}Z_{M^j}(v,\frac{\tau}{k}).
 \]
 So
 \[
Z_{(M^i)^{\otimes K}}(\bar{v}, (1,g^r),-\frac{1}{\tau}) =\tau^{\wt v}\sum_{j=0}^{p}S_{M^i,M^j}Z_{T_{g^r}^k(M^j)}(\bar{v}, (g^r,1)),\tau)
\]
On the other hand,
\[
Z_{(M^i)^{\otimes K}}(v, (1,g^r),-\frac{1}{\tau})=\tau^{\wt v}\sum_{j=0}^p S_{(M^i)^{\otimes k}, T_{g^r}^k(M^j)}Z_{T_{g^r}^k(M^j)}(v,(g^r,1),\tau).
\]
Comparing the right sides of these equations gives $S_{(M^i)^{\otimes k}, T_{g^r}^k(M^j)}=S_{M^i,M^j}$ for all $i,j.$

Similarly,
 \begin{equation*}
 \begin{split}
Z_{T_{g^r}^k(M^i)}(\bar{v}, (g^r,1),-\frac{1}{\tau})& =k^{-\wt v+1}Z_{M^i}(v,\frac{-1}{k\tau})\\
&=k^{-\wt v+1}(k\tau)^{\wt v}\sum_{j=0}^pS_{M^i,M^j}Z_{M^j}(v,k\tau)\\
&=\tau^{\wt v}\sum_{j=0}^pS_{M^i,M^j}Z_{(M^j)^{\otimes k}}(v,\tau)
\end{split}
\end{equation*}
and $S_{M^i,M^j}=S_{T_{g^r}^k(M^i),(M^j)^{\otimes k}}.$
\end{proof}

 We are now ready to compute the $S$-matrix for $(V^{\otimes k})^G.$ First we give a complete list of irreducible $(V^{\otimes k})^G$-modules following \cite{DRX}.
 Let $i_1,...,i_k\in [0,p] $ where  $[0,p]=\{0,...,p\}.$ Set $M^{i_1,...,i_k}=M^{i_1}\otimes \cdots \otimes M^{i_k}.$  Then $M^{i_1,...,i_k}$ is an irreducible
  $(V^{\otimes k})^G$-module if the cardinality $\{i_1,...,i_k\}$ is greater than  $1.$ Moreover, $M^{i_1,...,i_k}\circ g^r=M^{i_{1+r},\cdots, i_{k+r}}$ for $r=0,...,k-1$ where
  the sub-index $j+r$ is understood to be modulo $k.$ The
  $M^{i,...,i}$ is a  direct sum  of irreducible $(V^{\otimes k})^G$-modules    $(M^{i,...,i})^j$ for   $j=0,...,k-1$ where
  $$(M^{i,...,i})^j=\{w\in  M^{i,...,i}|gw=e^{\frac{-2\pi i j}{k}}w\}=\{\sum_{s=0}^{k-1}e^{\frac{2\pi i sj}{k}}g^sw|  w\in  M^{i,...,i}\}$$
  and $g$ acts on  $M^{i,...,i}$ in an obvious way.
  We have already mentioned that $G$ acts on each $T_{g^r}^k(M^i).$ So $T_{g^r}^k(M^i)=\oplus_{s=0}^{k-1} (T_{g^r}^k(M^i))^s$
  is a direct sum of irreducible  $(V^{\otimes k})^G$-modules such that $g$ acts on  $(T_{g^r}^k(M^i))^s$ as    $e^{\frac{-2\pi i s}{k}} .$
 Note that $\Irr(G)=\{\lambda_s|s=0,...,k-1\}$ where $\lambda_s(g)=
 e^{\frac{-2\pi i s}{k}}.$

  Note that the symmetric group $S_k$ acts on $[0,p]^k\setminus \{(i,\cdots,i)|i\in[0,p]\}$ in an obvious way. Let $I$ be a subset of
  $[0,p]^k\setminus \{(i,\cdots,i)|i\in[0,p]\}$ consisting of the orbit representatives under the action of $G.$

\begin{prop} The irreducible  $(V^{\otimes k})^G$-modules consist of
$$\{M^{i_1,...,i_k}| (i_1,...,i_k)\in I\}$$
and
$$\{ (M^{i,\cdots,i})^a ,  (T_{g^r}^k(M^i))^a|0\leq i\leq p, 0\leq a\leq k-1, 0<r<k\}.$$
\end{prop}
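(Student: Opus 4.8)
The plan is to invoke the general classification in Theorem \ref{mthm1}, which says that (since $V^{\otimes k}$ is regular and $G=\langle g\rangle$ is cyclic hence solvable, so $(V^{\otimes k})^G$ is rational and $C_2$-cofinite by the remark after (V1)--(V3)) every irreducible $(V^{\otimes k})^G$-module arises as a multiplicity space $M_\lambda$ for some irreducible $g^r$-twisted $V^{\otimes k}$-module $M$ and some $\lambda\in\Lambda_{G_M}$, with the complete list indexed by orbit representatives $M^j$ under the $G$-action on $\mathcal S=\cup_{r}\mathscr M(g^r)$ together with the characters $\lambda\in\Lambda_{G_{M^j}}$. So the entire task reduces to enumerating the $G$-orbits on $\mathcal S$ and, for each orbit representative $M$, computing the stabilizer $G_M$, the cocycle $\alpha_M$, and the irreducible projective characters $\Lambda_{G_M}$, then reading off the decomposition \eqref{decom}.

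First I would handle the untwisted sector $\mathscr M(1)$: its irreducibles are the $M^{i_1}\otimes\cdots\otimes M^{i_k}$, and $g$ permutes the tensor factors cyclically, so $M^{i_1,\dots,i_k}\circ g^r=M^{i_{1+r},\dots,i_{k+r}}$. The $G$-orbit of a tuple with at least two distinct entries has full length $k$ (here I use that $k$ is prime, so any nontrivial cyclic permutation is a $k$-cycle and fixes only constant tuples), giving $G_M=\{1\}$, trivial cocycle, a single character, and hence a single irreducible module $M^{i_1,\dots,i_k}$ per orbit; these are indexed by $I$. For the diagonal tuples $M^{i,\dots,i}$ the stabilizer is all of $G$, the cocycle is trivial, and $\Lambda_G=\Irr(G)=\{\lambda_a\mid 0\le a\le k-1\}$, so \eqref{decom} splits $M^{i,\dots,i}$ into the $k$ isotypic pieces $(M^{i,\dots,i})^a$. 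Next, for each $0<r<k$ the twisted sector $\mathscr M(g^r)$ consists of the $T_{g^r}^k(M^i)$ (by \cite{BDM} and the count recalled above, these are all the irreducible $g^r$-twisted modules, and each is $g$-stable since $g\in C_G(g^r)=G$ and $g$ commutes with $g^r$). Thus $G_{T_{g^r}^k(M^i)}=G$, the cocycle is again trivial (the restriction of the projective $G$-action on a twisted module for a cyclic group can be normalized to an ordinary representation), and the same $\Irr(G)$ produces the $k$ pieces $(T_{g^r}^k(M^i))^a$.

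Finally I would check that the orbits listed are distinct and exhaustive across all sectors: Theorem \ref{mthm1}(4) guarantees modules from different orbits of $\mathcal S$ are inequivalent, and within each orbit Theorem \ref{mthm1}(3) separates the distinct $\lambda$; combining these with the orbit enumeration shows the displayed set is a complete list without repetitions. The main obstacle I anticipate is the cocycle normalization: one must verify that $\alpha_M$ is trivial (or at least cohomologically trivial) for the diagonal and twisted modules so that $\C^{\alpha_M}[G]\cong\C[G]$ and $\Lambda_{G_M}$ really is $\Irr(G)=\{\lambda_a\}$ with the $a$-eigenspace description $gw=e^{-2\pi i a/k}w$. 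For the diagonal untwisted modules this is immediate since $G$ acts by genuine permutation; for the $g^r$-twisted modules it follows because $G=\langle g\rangle$ is cyclic and the explicit $\Delta_k$-construction of \cite{BDM} realizes the $g$-action honestly (as already used to compute $L_g(0)$ and the eigenvalue $e^{-2\pi i s/k}$ above), so no genuine projectivity survives. Once this is in place, the decomposition and the indexing by $\Irr(G)$ are forced.
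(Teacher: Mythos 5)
Your proposal is correct and is essentially the paper's own argument: the paper gives no separate proof of this proposition, stating it ``following \cite{DRX}'' immediately after exactly the analysis you describe --- non-constant tuples form free $G$-orbits because $k$ is prime, the diagonal modules $M^{i,\dots,i}$ and the twisted modules $T_{g^r}^k(M^i)$ have stabilizer $G$ with trivial cocycle (so $\Lambda_{G_M}=\Irr(G)$ and the multiplicity spaces in \eqref{decom} are precisely the $g$-eigenspaces $(M^{i,\dots,i})^a$ and $(T_{g^r}^k(M^i))^a$), with completeness and inequivalence supplied by Theorem \ref{mthm1} since $(V^{\otimes k})^{G}$ is rational and $C_2$-cofinite for the solvable group $G$. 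One small repair: $g$-stability of $T_{g^r}^k(M^i)$ does not follow from $g\in C_G(g^r)$, which by Lemma \ref{circ}(2) only shows that $\circ\, g$ permutes the finite set $\mathscr{M}(g^r)$; the correct reason, which the paper uses implicitly, is that any $\sigma$-twisted module is $\sigma$-stable (Section 3, \cite{DLM5}; cf.\ Lemma \ref{general}) together with $\langle g^r\rangle=G$ for prime $k$, whence $G_{T_{g^r}^k(M^i)}=G$.
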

 The following theorem gives explicit expressions of  the entries of the  $S$-matrix of $(V^{\otimes k})^G$ by noting that $S$-matrix is symmetric \cite{H}.

\begin{thm} (1) Let $i,j=0,...,p,$  $0<r,s<k,$ $0\leq a,b<k.$ Then
  $$S_{(T_{g^r}^k(M^i))^a, N}=\frac{1}{k}
  \left\{\begin{array}{ll}
S_{T_{g^r}^k(M^i), T_{g^s}^k(M^j) }e^{\frac{2\pi i (sa+rb)}{k}}  & {\rm if}\  N=(T_{g^s}^k(M^j))^b \\
S_{M^i,M^j}e^{\frac{2\pi i rb}{k}}
 & {\rm if}  N=  (M^{j,\ldots, j})^b \\
0  & {\rm otherwise}
 \end{array}\right.$$
where $S_{T_{g^r}^k(M^i), T_{g^s}^k(M^j) } $ is given in Lemma \ref{l5.4} and  $S_{M^i,M^j} $ is the entry of $S$-matrix of $V$.

(2) Let $(i_1,...,i_k), (t_1,...,t_k)\in I.$ and $i\in[0,p],$ $0\leq b<k.$  Then
 $$S_{M^{i_1,...,i_k}, N}=\left\{\begin{array}{ll}\sum_{r=0}^{k-1}\prod_{j=1}^k S_{M^{i_j},M^{t_{j+r}}}& {\rm if}\  N=M^{t_1,...,t_k} \\  \prod_{j=1}^k S_{M^{i_j},M^i} & {\rm if} \ N=  (M^{j,\ldots, j})^b
 \end{array}\right.$$

 (3) Let $i,j\in [0,p]$ and $a,b\in[0,k-1].$ Then
 $$S_{(M^{i,...,i})^a,(M^{j,...,j})^b } =\frac{1}{k}S_{M^i,M^j}^k$$
  \end{thm}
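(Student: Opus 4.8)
The plan is to read all three parts off Corollary \ref{coro1}, the abelian specialization of the restricted $S$-matrix formula, since $G=\langle g\rangle$ is cyclic of prime order $k$ and hence abelian. Corollary \ref{coro1} expresses every entry $S_{M^i_\lambda,M^j_\mu}$ of $(V^{\otimes k})^G$ as a weighted sum of $S$-matrix entries of the ambient algebra $V^{\otimes k}$ on its twisted conformal block, and the latter are exactly what Lemmas \ref{l5.4} and \ref{case2} compute. So the first thing I would record is the cohomological and orbit-stabilizer input. Because $G\cong\mathbb{Z}/k\mathbb{Z}$ is cyclic, $H^2(G,\mathbb{C}^\times)=0$, so every cocycle $\alpha_M$ is a coboundary and each $\mathbb{C}^{\alpha_M}[G_M]$ may be identified with the ordinary group algebra $\mathbb{C}[\mathbb{Z}/k\mathbb{Z}]$; its irreducible characters are the one-dimensional $\lambda_0,\dots,\lambda_{k-1}$ with $\lambda_a(g)=e^{-2\pi i a/k}$ and $\dim W_{\lambda_a}=1$. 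By Lemma \ref{general} and the primality of $k$, every twisted module $T^k_{g^r}(M^i)$ with $0<r<k$ is $G$-stable, so $G_{T^k_{g^r}(M^i)}=G$ with singleton orbit; each diagonal module $(M^i)^{\otimes k}=M^{i,\dots,i}$ is fixed by $g$, so again $G_{M^{i,\dots,i}}=G$ with singleton orbit; and each non-diagonal $M^{i_1,\dots,i_k}$ has trivial stabilizer and orbit $\{M^{i_{1+r},\dots,i_{k+r}}\mid 0\le r<k\}$ of size $k$, with $M^{i_1,\dots,i_k}\circ g^r=M^{i_{1+r},\dots,i_{k+r}}$.

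For part (1) I would take the first module to be $(T^k_{g^r}(M^i))^a$, so in Corollary \ref{coro1} we have $g_i=g^r$, $\lambda=\lambda_a$, and $|G_{M^i}|=k$. The crux is identifying $C_{i,j}$. Since $g^{-r}$ generates $G$, a module is $g^{-r}$-stable if and only if it is $G$-stable; hence the $g^{-r}$-stable locus $\cup_{h\in G}\mathscr{M}(h,g^{-r})$ consists exactly of all the twisted modules $T^k_{g^s}(M^j)$ and all the diagonal modules $(M^j)^{\otimes k}$, while no non-diagonal orbit meets it. This immediately gives $C_{i,j}=\{1\}$ for twisted and diagonal $N$ and $C_{i,j}=\emptyset$ (hence the entry $0$) for non-diagonal $N$, which is the third case. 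Evaluating the characters as $\overline{\lambda_a(\overline{g^s})}=e^{2\pi i as/k}$ and $\lambda_b(\overline{g^{-r}})=e^{2\pi i br/k}$ and substituting $S_{T^k_{g^r}(M^i),T^k_{g^s}(M^j)}$ from Lemma \ref{l5.4} yields the first line, while substituting $S_{T^k_{g^r}(M^i),(M^j)^{\otimes k}}=S_{M^i,M^j}$ from Lemma \ref{case2} yields the second.

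For parts (2) and (3) I would apply Corollary \ref{coro1} in the same way, now invoking the factorization $S_{M^{a_1}\otimes\cdots\otimes M^{a_k},\,M^{b_1}\otimes\cdots\otimes M^{b_k}}=\prod_{j=1}^k S_{M^{a_j},M^{b_j}}$ of the $S$-matrix of the tensor product algebra, which follows from the multiplicativity of trace functions over tensor factors together with the $S$-transformations (\ref{S-tran1})--(\ref{S-tran2}). For part (2) the first module is the non-diagonal $M^{i_1,\dots,i_k}$ with $g_i=1$ and trivial stabilizer, so the $g_i^{-1}=1$-stable locus is all of $\mathscr{M}(1)$ and $C_{i,j}$ is a transversal of $G/G_{M^j}$: when $N=M^{t_1,\dots,t_k}$ this is all of $G$ and summing over $M^{t_1,\dots,t_k}\circ g^r=M^{t_{1+r},\dots,t_{k+r}}$ reproduces $\sum_{r}\prod_j S_{M^{i_j},M^{t_{j+r}}}$, whereas when $N=(M^{i,\dots,i})^b$ the orbit is a singleton, $C_{i,j}=\{1\}$, and the character factors collapse to $1$, leaving $\prod_j S_{M^{i_j},M^i}$. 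For part (3) both modules are diagonal with full stabilizer $G$ and singleton orbit, so $C_{i,j}=\{1\}$, $|G_{M^i}|=k$, and the characters evaluated at $g_i=g_j=1$ are $1$, leaving $\tfrac1k S_{M^{i,\dots,i},M^{j,\dots,j}}=\tfrac1k S_{M^i,M^j}^k$. The symmetry of the $S$-matrix then fills in the remaining orderings (for instance a non-diagonal $N$ in part (2) paired with a twisted first module, which is $0$ by part (1)).

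The step I expect to be the main obstacle is the bookkeeping around $C_{i,j}$ and the phase factors rather than any deep analytic point: one must verify case by case precisely which members of the orbit $O_j$ lie in $\cup_{h\in G_{M^i}}\mathscr{M}(h,g_i^{-1})$, and then evaluate $\lambda$ and $\mu$ at the correct conjugates $\overline{g_j}$ and $\overline{g_i^{-1}}$. This is where the triviality of the cocycle is essential, since it guarantees that $\mathbb{C}^{\alpha_M}[G_M]$ has the elementary characters $\lambda_a$ with the stated root-of-unity values and that $\dim W_{\lambda_a}=1$, so that all factors $\mu(\overline{\,1\,})$ reduce to $1$. The supporting tensor-factorization of the $S$-matrix of $V^{\otimes k}$, though standard, must be stated carefully as it is exactly the bridge that converts the $V^{\otimes k}$-data furnished by Lemmas \ref{l5.4} and \ref{case2} into the products of $V$-data $S_{M^{i_j},M^{t_j}}$ appearing in the final formulas.
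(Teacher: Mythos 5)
Your proposal is correct and follows essentially the same route as the paper: all three parts are read off Corollary \ref{coro1} with the abelian/cyclic specialization, feeding in the $V^{\otimes k}$ $S$-matrix entries from Lemmas \ref{l5.4} and \ref{case2} together with the tensor-factorization $S_{M^{i_1,\ldots,i_k},M^{t_1,\ldots,t_k}}=\prod_j S_{M^{i_j},M^{t_j}}$ obtained from multiplicativity of trace functions. If anything, your bookkeeping is more explicit than the paper's (you justify $G_{T^k_{g^r}(M^i)}=G$, the identification of $C_{i,j}$ including the vanishing case, and the triviality of the cocycle via $H^2(\Z/k\Z,\C^{\times})=0$, all of which the paper leaves implicit), and your character evaluations reproduce the stated phases exactly.
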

  \begin{proof}

 (1) In Corollary \ref{coro1}, let $g_i=g^r,$ $g_j=g^{s'}$ with $s'=s$ or $1$  and $\lambda=\lambda_a, \mu=\lambda_b,$ then $\overline{\lambda_a(g^s) }\lambda_b(g^{-r})=e^{\frac{2\pi i (s'a+rb)}{k}}.$ The result follows.

 (2)  Let $i_1,...,i_k\in [0,p],$  $v_1,...,v_k\in V.$ Then
$$Z_{M^{i_1,...,i_k}}(v_1\otimes\cdots \otimes v_k,\tau)=Z_{M^{i_1}}(v_1,\tau)\cdots Z_{M^{i_k}}(v_k,\tau)$$
So we have
 \begin{equation*}
 \begin{split}
 Z_{M^{i_1,...,i_k}}(v_1\otimes\cdots \otimes v_k-\frac{1}{\tau})
 &=\tau^{\wt v}\prod_{j=1}^k\left(\sum_{t_j=0}^pS_{M^{i_j},M^{t_j}}Z_{M^{t_j}}(v_j,\tau)\right)\\
 &=\tau^{\wt v}\sum_{(t_1,...,t_k)\in[0,p]^k} \prod_{j=1}^k S_{M^{i_j},M^{t_j}}Z_{M^{t_1,...,t_k}}(v_1\otimes\cdots\otimes v_k,\tau).
  \end{split}
\end{equation*}
So $S_{M^{i_1,...,i_k},M^{t_1,...,t_k}}=\prod_{j=1}^k S_{M^{i_j},M^{t_j}}$ for irreducible $V^{\otimes k}$-modules.

Let $(i_1,...,i_k), (t_1,...,t_k)\in I.$
Using Corollary \ref{coro1} with $M^i=M^{i_1,...,i_k}, M^j=M^{t_1,...,t_k}, g_i=g_j=1, C_{i,j}=G$ and $\lambda=\mu=1$
gives
 $$S_{M^{i_1,...,i_k},M^{t_1,...,t_k}}=\sum_{r=0}^{k-1}\prod_{j=1}^k S_{M^{i_j},M^{t_{j+r}}}.$$
 Now let $i\in[0,p]$ and $b=0,...,k-1.$ Using Corollary \ref{coro1} again with $M^i=M^{i_1,...,i_k}, M^j=M^{i,\ldots, i},  g_i=g_j=1, \lambda=1, \mu=\lambda_b$ gives
   $$S_{M^{i_1,...,i_k},(M^{i,...,i})^b}=\prod_{j=1}^k S_{M^{i_j},M^i}.$$

(3) can be proved similarly.
\end{proof}

 In principle one can compute the $S$-matrix of $V^{\otimes k}$ for any $k.$ The idea is clear but the computation will be more complicated as $g^r$ could be a product of
 several disjoint cycles.

\end{document}